\documentclass[11pt]{nyjm}
\usepackage{xcolor,graphicx}
\usepackage{hyperref}
\usepackage{pifont}
\usepackage{amsthm}
\usepackage{amsfonts}
\usepackage{amssymb,amsmath}
\usepackage{upgreek}
\usepackage[all]{xy}
\usepackage{amsmath}
\usepackage{epsfig}

\newtheorem{theorem}{Theorem}[subsection]
\newtheorem{lemma}[theorem]{Lemma}

\newtheorem{corollary}[theorem]{Corollary}
\theoremstyle{definition}
\newtheorem{definition}[theorem]{Definition}
\newtheorem{example}[theorem]{Example}
\theoremstyle{remark}
\newtheorem{remark}[theorem]{Remark}

\begin{document}
\title{Relating virtual knot invariants to links in the $3$-sphere}
\author{Micah Chrisman}
\address{Department of Mathematics, Monmouth University, West Long Branch, New Jersey, USA}
\email{mchrisma@monmouth.edu}
\author{Robert G. Todd}
\address{Department of Natural and Applied Sciences, Mount Mercy University, Cedar Rapids, Iowa, USA}
\email{rtodd@mtmercy.edu}
\begin{abstract} Geometric interpretations of some virtual knot invariants are given in terms of invariants of links in $\mathbb{S}^3$. Alexander polynomials of almost classical knots are shown to be specializations of the multi-variable Alexander polynomial of certain two-component boundary links of the form $J \sqcup K$ with $J$ a fibered knot. The index of a crossing, a common ingredient in the construction of virtual knot invariants, is related to the Milnor triple linking number of certain three-component links $J \sqcup K_1 \sqcup K_2$ with $J$ a connected sum of trefoils or figure-eights. Our main technical tool is virtual covers. This technique, due to Manturov and the first author, associates a virtual knot $\upsilon$ to a link $J \sqcup K$, where $J$ is  fibered and $\text{lk}(J,K)=0$. Here we extend virtual covers to all multicomponent links $L=J \sqcup K$, with $K$ a knot. It is shown that an unknotted component $J_0$ can be added to $L$ so that $J_0 \sqcup J$ is fibered and $K$ has algebraic intersection number zero with a fiber of $J_0 \sqcup J$. This is called fiber stabilization. It provides an avenue for studying all links with virtual knots. 
\end{abstract}
\keywords{virtual knots, virtual covers, multi-variable Alexander polynomial, boundary links, index polynomial, Milnor triple-linking number}
\subjclass[2010]{Primary: 57M25, Secondary: 57M27}

\maketitle

\section{Introduction}

As L. H. Kauffman notes in the preface to ``Virtual Knots: The State of the Art" by Manturov and Ilyutko \cite{vktsoa}, virtual knots were developed to ``simultaneously have a diagrammatic theory that could handle knots in thickened surfaces and would generalize knot theory to arbitrary oriented, not necessarily planar, Gauss diagrams." Since then the theory has grown to include many generalizations of classical knot invariants as well as to provide another context to study Vassilliev invariants. However, understanding the relationship between virtual knots and the standard geometrical constructs associated to knots in $\mathbb{S}^3$ is much harder. To this end, the first author, along with Manturov, initiated the study of virtual covers of links \cite{chrisman2013fibered}. This theory views a virtual knot as a model of a knot in the complement of a fibered link in $\mathbb{S}^{3}$. Here we obtain explicit relations between virtual knot invariants and classical link invariants via virtual covers. This both provides a geometric interpretation of virtual knot invariants and allows for the tools of virtual knot theory to be employed in the study of classical links.    

Recall that the standard geometric interpretation of the Alexander polynomial $\Delta_K(t)$ of a knot $K$ in $\mathbb{S}^3$ is given in terms of a Seifert surface $\Sigma_K$ of $K$. A Seifert matrix $V$ is formed from the pairwise linking numbers of push-offs of a basis of $H_1(\Sigma_K;\mathbb{Z})$. Then $\Delta_K(t)\doteq \det(tV-V^{\uptau})$. An analogous formula for the multi-variable Alexander polynomial $\Delta_L(x,y)$ of a two-component link $L=J \sqcup K$ was discovered by D. Cooper \cite{cooper}. In this case, the single Seifert surface is replaced with a $2$-complex of Seifert surfaces, $S=\Sigma_J \cup \Sigma_K$. The Milnor $\bar{\mu}$-invariants may also be interpreted geometrically using Seifert surfaces. For example, Cochran's link derivatives \cite{cochran1990derivatives} can be used to show that if $L=K_1 \sqcup K_2 \sqcup K_3$ is a 3-component link with vanishing pairwise linking numbers, then $\bar{\mu}_{123}(L)=-\text{lk}(\Sigma_{K_1} \cap \Sigma_{K_2},K_3)$. 

Seifert surfaces, however, are not defined for all virtual knots. Every virtual knot can be represented by a knot in some thickened oriented surface $\Sigma \times [0,1]$. Since not all such knots are homologically trivial, they do not all bound a surface in $\Sigma \times [0,1]$. Virtual covers provide an alternative model in which this obstruction is subverted. Consider a two-component link $L=J \sqcup K$ with $J$ fibered and $\text{lk}(J,K)=0$. The infinite cyclic cover of $J$ is a thickened surface $\Sigma_J \times \mathbb{R}$ for some fiber $\Sigma_J$ of $J$. Thus, $K$ lifts to a knot $\mathfrak{k}$ in $\Sigma_J \times \mathbb{R}$. The knot $\mathfrak{k}$ projects to a virtual knot $\upsilon$. Then $L$, $\mathfrak{k}$ and the covering map $\Sigma_J \times \mathbb{R} \to \mathbb{S}^3\smallsetminus J$ form a \emph{virtual cover} of $L$. The virtual knot $\upsilon$ is called the \emph{associated virtual knot} to $L$. The assignment $L \to \upsilon$ is surjective \cite{chrisman2013fibered}, so that all virtual knots can obtained from some link in $\mathbb{S}^3$. 

It is thus natural to ask if invariants of virtual knots can be expressed in terms of Seifert surfaces of links $L$ in $\mathbb{S}^3$. Here we present two such geometric realizations of virtual knot invariants. The first result considers boundary links $L=J \sqcup K$ where $K$ bounds a Seifert surface $\Sigma_K$ disjoint from a fiber $\Sigma_J$ of $J$. We will show that the associated virtual knot in this case is \emph{almost classical} (i.e. is homologically trivial in some thickened surface representation). Boden et-al. \cite{boden2015virtual} defined an Alexander polynomial for almost classical knots. Here we will denote this by $\overline{\Delta}_{\upsilon}(t)$. The main result of this paper is that $\overline{\Delta}_{\upsilon}(t)$ can be realized as a specialization of the multi-variable Alexander polynomial (MVAP) of the boundary link $L$. More exactly, we show that if $\nabla_{\Sigma_J,\Sigma_K}(t_1,t_2)$ is the MVAP computed from a fiber $\Sigma_J$ of $J$ and a Seifert surface $\Sigma_K$ of $K$ disjoint from $\Sigma_J$, then $\overline{\Delta}_{\upsilon}(t)=\pm t^{2g_K} \cdot \nabla_{\Sigma_J,\Sigma_K}(0,t^{-1})$, where $g_K$ is the genus of $\Sigma_K$.

The second realization result considers the index of a crossing in a virtual knot diagram. For an oriented virtual knot diagram represented by a knot diagram $K$ on a surface $\Sigma$, the index of a crossing $x$ is (up to sign) the algebraic intersection number of the two curves $K_1$, $K_2$ obtained by performing the oriented smoothing at $x$. The index features prominently in the computation of many virtual knot invariants, such as the Henrich-Turaev polynomial \cite{henrich2010sequence} and the writhe polynomial of Cheng \cite{cheng}. We prove that $\upsilon$ can be modeled by a link $L=J \sqcup K$, where $J$ is a connect sum of trefoils or figure-eight knots, and that the index of a classical crossing $x$ of $\upsilon$ is $\bar{\mu}_{123}(J \sqcup K_1 \sqcup K_2)$. Again, $K_1 \sqcup K_2$ is the two-component link obtained by performing the oriented smoothing at $x$. Hence, any invariant defined via the index can be geometrically interpreted in terms $\bar{\mu}_{123}$.

Virtual covers thus provide a way to unmask how virtual knot invariants are hiding inside invariants of classical links. Virtual covers can also be used to indicate geometric properties of classical links. For example, they were used in \cite{chrisman2014virtual} to prove that some classical links are non-invertible. If $L=J \sqcup K$ as above is an invertible link, the assignment $L \to \upsilon$ yields a certain symmetry condition on $\upsilon$. Moreover, there are easily computable virtual knot invariants (e.g. the Sawollek polynomial \cite{saw}) that are not invariant under this symmetry. All together, this suggests that virtual knot theory can be used to extract additional geometric content from classical link invariants. It is thus desirable to extend the set of links on which virtual covers are defined to as large a set of links as possible. Here we introduce \emph{fiber stabilization of links}. We prove that after a fiber stabilization, every multi-component link $L=J\sqcup K$ has a virtual cover. The idea is to add an unknotted component $J_0$ to $L$ so that $J_0 \sqcup J$ is fibered and $K$ has algebraic intersection number $0$ with a fiber of $J_0 \sqcup J$. Applications of fiber stabilization to classical link invariants will be considered in future papers.

The organization of this paper is as follows. First we review virtual knots (Section \ref{sec_background}), virtual covers (Section \ref{sec_vc}), and almost classical knots (Section \ref{sec_ac}). Section \ref{sec_alex} establishes the relationship between the Alexander polynomial of an almost classical knot and the multi-variable Alexander polynomial of a boundary link. Section \ref{sec_index} provides the interpretation of the index in terms of Milnor's triple linking number. Section \ref{sec_fiber} shows that every multi-component link has a fiber stabilization that in turn has a virtual cover. Directions for future research are considered in Section \ref{sec_future}.

\subsection{Models of Virtual Knots} \label{sec_background}

A virtual knot or link may be described via several different models. They are: virtual link diagrams, Gauss diagrams, link diagrams on surfaces, and links in thickened surfaces. Each model has an equivalence relation under which they all coincide as virtual links. 

Consider first \emph{virtual knot diagrams} as defined in \cite{kauffman1998virtual, kauffman2012introduction}. A virtual knot diagram is a generic immersion $\upsilon : \mathbb{S}^{1} \rightarrow \mathbb{R}^{2}$ such that each double point is marked as either a classical crossing or a virtual crossing. Virtual crossings are denoted as circled double points while classical crossing are denoted as usual. Virtual knot diagrams are considered equivalent, denoted by $\leftrightharpoons$, if they are related by a sequence of extended Reidemeister moves (see Figure \ref{fig_rmoves}). Each of the moves $v\Omega 1-v\Omega 4$ may be replaced by the \emph{detour move}, which allows for the erasing of any arc between classical crossings and reconnecting the ends, so long as all new double points are marked as virtual crossings. This is depicted schematically in Figure \ref{fig_rmoves}, bottom left. 

\begin{figure}[htb]
\begin{tabular}{|cc|c|} \hline
 & & \\
\begin{tabular}{ccc}  \def\svgwidth{.35in} 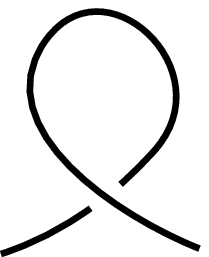 & \begin{tabular}{c} $\leftrightharpoons$ \\ \\ \end{tabular} & 
\def\svgwidth{.35in} 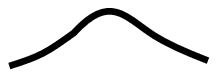 \end{tabular} & \begin{tabular}{ccc} \def\svgwidth{.352in} 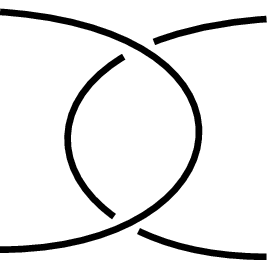 & \begin{tabular}{c} $\leftrightharpoons$ \\ \\ \end{tabular} & \def\svgwidth{.352in} 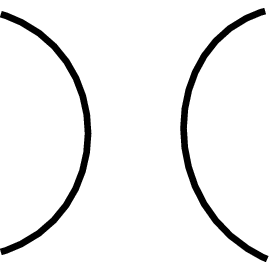 \end{tabular} & \begin{tabular}{ccc}  \def\svgwidth{.352in} 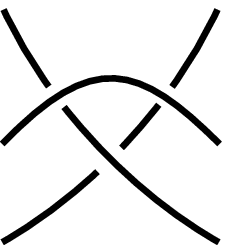 & \begin{tabular}{c} $\leftrightharpoons$ \\ \\ \end{tabular} & 
\def\svgwidth{.352in} 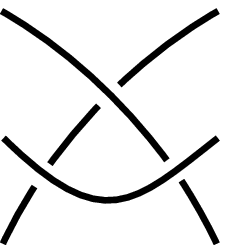 \end{tabular} \\   $\Omega 1$ & $ \Omega 2$ & $\Omega 3$ \\ 
 & & \\
\begin{tabular}{ccc} \def\svgwidth{.35in} 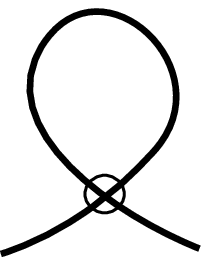 & \begin{tabular}{c} $\leftrightharpoons$ \\ \\ \end{tabular} &  \def\svgwidth{.35in} \input{r1_R.eps_tex} \end{tabular}  & \begin{tabular}{ccc}  \def\svgwidth{.352in} 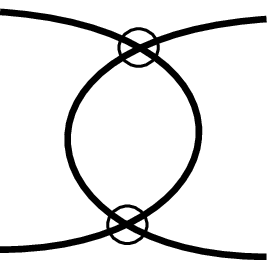 & \begin{tabular}{c} $\leftrightharpoons$ \\ \\ \end{tabular} & 
\def\svgwidth{.352in} \input{r2_R.eps_tex} \end{tabular} & \begin{tabular}{ccc}  \def\svgwidth{.352in} 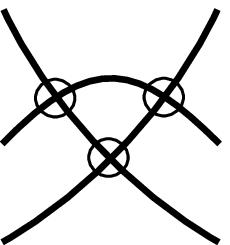 & \begin{tabular}{c} $\leftrightharpoons$ \end{tabular} & 
\def\svgwidth{.352in} 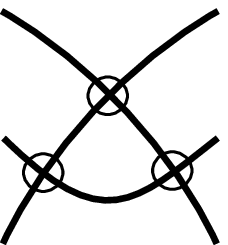 \end{tabular}    \\  
$v\Omega 1$ & $ v\Omega 2$ & $v\Omega 3$ \\ \cline{1-2}
 & & \\ 
\multicolumn{2}{|c|}{\begin{tabular}{c}  \def\svgwidth{.85in} 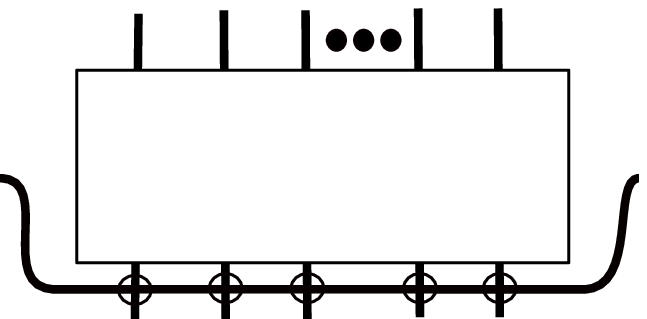 \end{tabular} \begin{tabular}{c} $\leftrightharpoons$ \end{tabular} \begin{tabular}{c}  \def\svgwidth{.85in} 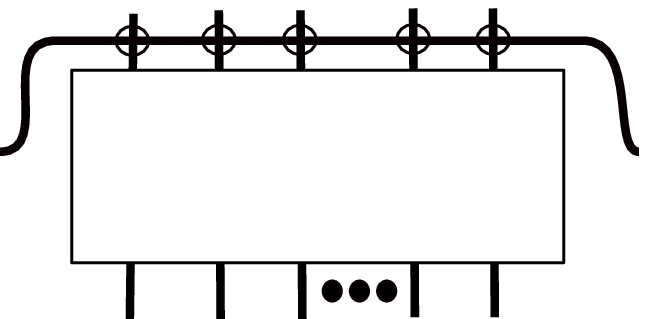 \end{tabular}}  & \begin{tabular}{ccc}  \def\svgwidth{.35in} 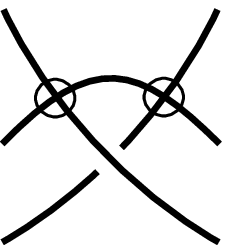 & \begin{tabular}{c} $\leftrightharpoons$ \end{tabular} & \def\svgwidth{.35in} 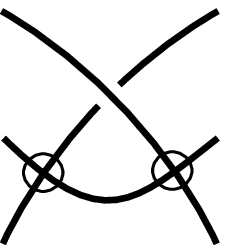 \end{tabular} \\  \multicolumn{2}{|c|}{\underline{The Detour Move}} &  $v\Omega 4$  \\ \hline
\end{tabular} \caption{The extended Reidemeister moves and the detour move.}\label{fig_rmoves}
\end{figure}

A \emph{Gauss diagram} of a virtual knot given by such an immersion $\upsilon:\mathbb{S}^1 \to \mathbb{R}^2$ is a decoration of the domain $\mathbb{S}^{1}$ of $\upsilon$ such that the pre-images of the classical double points are connected by signed arrows that point from the over crossing arc to the under crossing arc. The sign of the arrow corresponds to the sign of a crossing via the standard right hand rule. The equivalence classes of virtual knots correspond to Gauss diagrams modulo orientation preserving diffeomorphisms of $\mathbb{S}^1$ and diagrammatic versions of the classical moves $\Omega 1,\Omega 2, \Omega 3$.

Given a virtual knot diagram $\upsilon$, one may construct a knot diagram on a surface as follows. A disc is placed around each classical crossing. The discs are glued together using untwisted bands that follow the arcs of the diagram. At a virtual crossing, the bands pass over one another. See Figure \ref{fig_four_models} (3). If discs are attached to the boundary components of this surface, we obtain the \emph{Carter surface} of $\upsilon$ \cite{carter1991classifying}. 

Conversely, let $\Sigma$ be a compact connected oriented (cco) smooth surface and $\mathfrak{k}$ a knot in $\Sigma \times [0,1]$. Such knots will be considered equivalent up to ambient isotopy, orientation preserving diffeomorphisms of $\Sigma$, and \emph{stablization}/\emph{destabilzation}. Stabilization/destabilization is the relation defined by removing/adding 1-handles from $\Sigma$ that do not intersect a knot diagram of $\mathfrak{k}$ projected to $\Sigma$.  Equivalence classes of knots in thickened surfaces are then in one-to-one correspondence with virtual knots. Kuperberg \cite{kuperberg2003virtual} furthermore showed that there is a knot in a thickened surface of least genus corresponding to each virtual knot. 

\begin{figure}[htb]
\fcolorbox{black}{white}{
$\begin{array}{cc} \\
\begin{array}{c}
\def\svgwidth{1in}
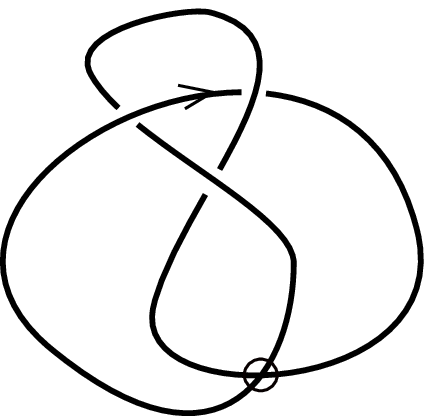 \\ (1) \end{array} & \begin{array}{c}
\def\svgwidth{1in}
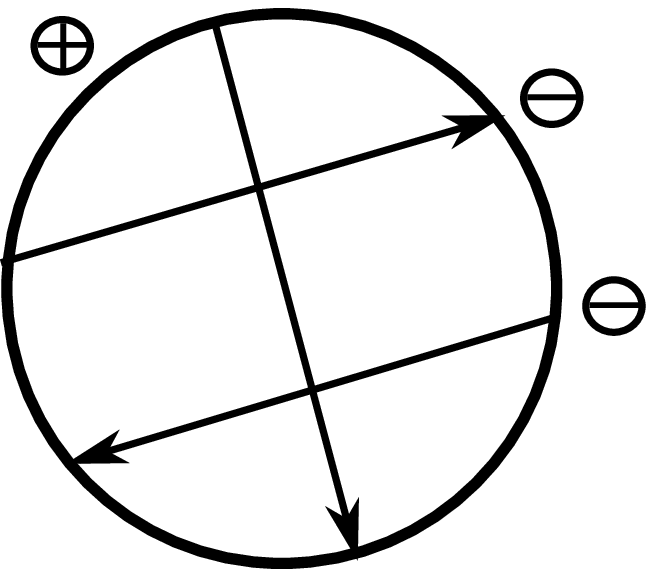 \\ (2) \end{array} \\  \\ \begin{array}{c}
\def\svgwidth{1in}
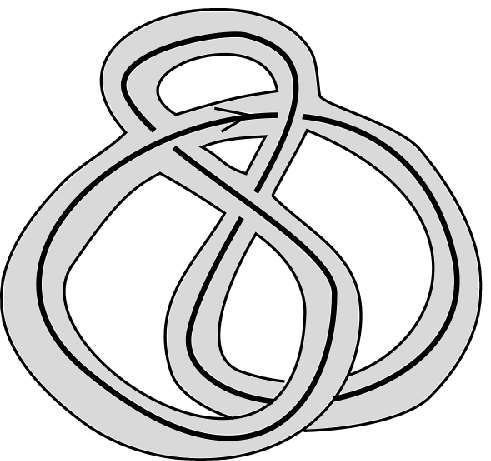 \\ (3) \end{array} & \begin{array}{c}
\def\svgwidth{1.72in}
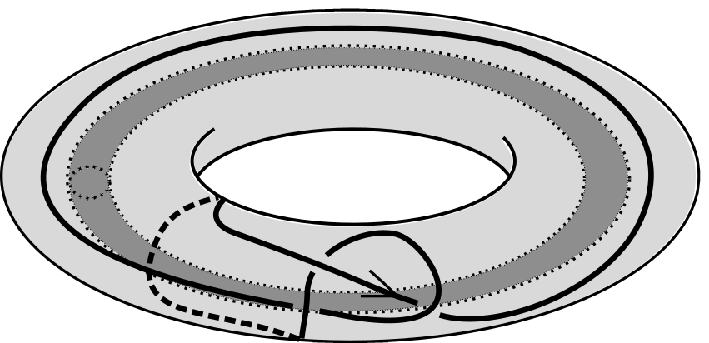 \\ (4) \end{array} \\ 
\end{array}
$}
\caption{The four equivalent models: (1) virtual knot diagrams, (2) Gauss diagrams, (3) knot diagrams on surfaces, and (4) knots in thickened surfaces.}\label{fig_four_models}
\end{figure}

\subsection{Virtual Covers of Links} \label{sec_vc} Throughout the text, we will denote by $X\smallsetminus \nu Y$ the closed space obtained by deleting an open tubular neighborhood of $Y$ from $X$. For a knot $J$, let $N_J=\mathbb{S}^3\smallsetminus \nu J$ denote the knot exterior. Recall that a knot $J$ is fibered if it has a Seifert surface $\Sigma_J$ such that the pairs $(\mathbb{S}^3\smallsetminus \nu \Sigma_J,(\mathbb{S}^3\smallsetminus \nu\Sigma_J) \cap \partial N_J)$ and $(\Sigma_J \cap N_J,\partial (\Sigma_J \cap N_J)) \times \mathbb{I}$ are diffeomorphic. In other words, cutting out $\Sigma_J$ produces a thickened surface. The surface $\Sigma_J$ is called a \emph{fiber}. It is a minimal genus Seifert surface for $J$ (see \cite{kawauchi}, Theorem 4.1.10). By Stallings theorem \cite{stall_certain}, a knot is fibered if and only if the commutator subgroup of the knot group $\pi_1(N_J,z_0)$ is finitely generated and free. Thus $N_J$ admits a covering space $\Pi_J:(\Sigma_J \cap N_J) \times \mathbb{R} \to N_J$, where $(\Pi_J)_*(\pi_1((\Sigma_J \cap N_J) \times \mathbb{R},x_0)) \cong [\pi_1(N_J,z_0),\pi_1(N_J,z_0)]$.

For a knot $K$ in a manifold $N$ we write $K^N$. Let $\Sigma$ be a cco smooth surface. Suppose that $N$ is any cco $3$-manifold admitting a regular orientation preserving covering space $\Pi:\Sigma \times \mathbb{R} \to N$. Suppose that $K^N$ is a knot in $N$ and that there is a knot $\mathfrak{k}^{\Sigma \times \mathbb{R}}$ such that $\Pi(\mathfrak{k})=K$. The knot $\mathfrak{k}$ stabilizes to a virtual knot $\upsilon$. The triple $(\mathfrak{k}^{\Sigma \times \mathbb{R}}, \Pi,K^N)$ is called a \emph{virtual cover} of $K$ and $\upsilon$ is called the \emph{associated virtual knot}. Specific details on virtual covers including basic properties and invariance are addressed in \cite{chrisman2013fibered}.

\begin{figure}[htb]
\fcolorbox{black}{white}{
$\begin{array}{cc} \\
\def\svgwidth{3in}
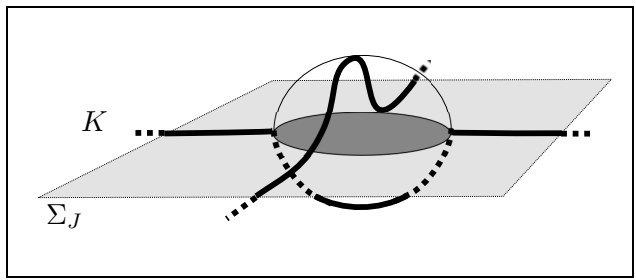 \\ \\
\end{array}
$}
\caption{A crossing in a ball.}\label{fig_cross}
\end{figure}

Virtual covers for links in $\mathbb{S}^3$ arise from the following construction. Let $L=J \sqcup K$ be a two component link with $J$ fibered and $\text{lk}(J,K)=0$. Then there is a fiber $\Sigma_J$ of $J$, a covering space $\Pi:\Sigma_J \times \mathbb{R} \to N_J$, and a knot $\mathfrak{k}^{\Sigma_J \times \mathbb{R}}$ such that $\Pi(\mathfrak{k})=K$. The knot $\mathfrak{k}$ projects to a virtual knot $\upsilon$. If the link $L$ is in \emph{special Seifert form} (SSF), then $\upsilon$ can be computed directly from a link diagram.  We next sketch the definition of SSF and the computation of $\upsilon$. For a precise definition, see \cite{chrisman2014virtual}.

Figure \ref{fig_cross} shows a configuration of arcs called a \emph{crossing in a ball}. It consists of a $3$-ball $B$ embedded in a coordinate neighborhood of a point on $\Sigma_J$ in a tubular neighborhood $\nu\Sigma_J$ such that $B \cap \Sigma_J$ in a disc. The disc divides $B$ into an upper and lower hemisphere, where the over-crossing arc lies in the upper hemisphere and the under-crossing one lies in the lower hemisphere. Suppose a pairwise disjoint collection $B_1,\ldots,B_p$ of crossing in balls on $\Sigma_J$ are joined together at their arc endpoints by a collection of disjoint simple arcs in $\overline{\Sigma_J\smallsetminus \sqcup_{i=1}^p B_i}$ so that the result is a knot $K\subset \mathbb{S}^3$. Then we say that $K$ is in \textit{special Seifert form} (SSF). The associated virtual knot, then, is the virtual knot corresponding to the diagram of $K$ on $\Sigma_J$ (see Figure \ref{fig_virt_cov}). In particular, there is a one-to-one correspondence between classical crossings of the associated virtual knot and self-crossings of $K$ as a diagram on $\Sigma_{J}$. SSF also extends to $n+1$ component links $L=J \sqcup K$ with $J$ a fibered $n$ component link and $K$ a knot. The following theorem, proved in \cite{chrisman2015virtual}, shows that the associated virtual knot functions as a invariant of SSF links. The links in the statement are \emph{ordered} (or \emph{colored}) in the sense that the $n$ components of $L$ have a fixed labeling as $1,\ldots, n$. Two ordered oriented links $L_1,L_2$ are equivalent, denoted $L_1 \leftrightharpoons L_2$ if there is an ambient isotopy taking $L_1$ to $L_2$ that preserves both the orientations and the ordering.

\begin{figure}[htb]
\fcolorbox{black}{white}{
$\begin{array}{ccc} \\
\begin{array}{c}
\def\svgwidth{1.45in}
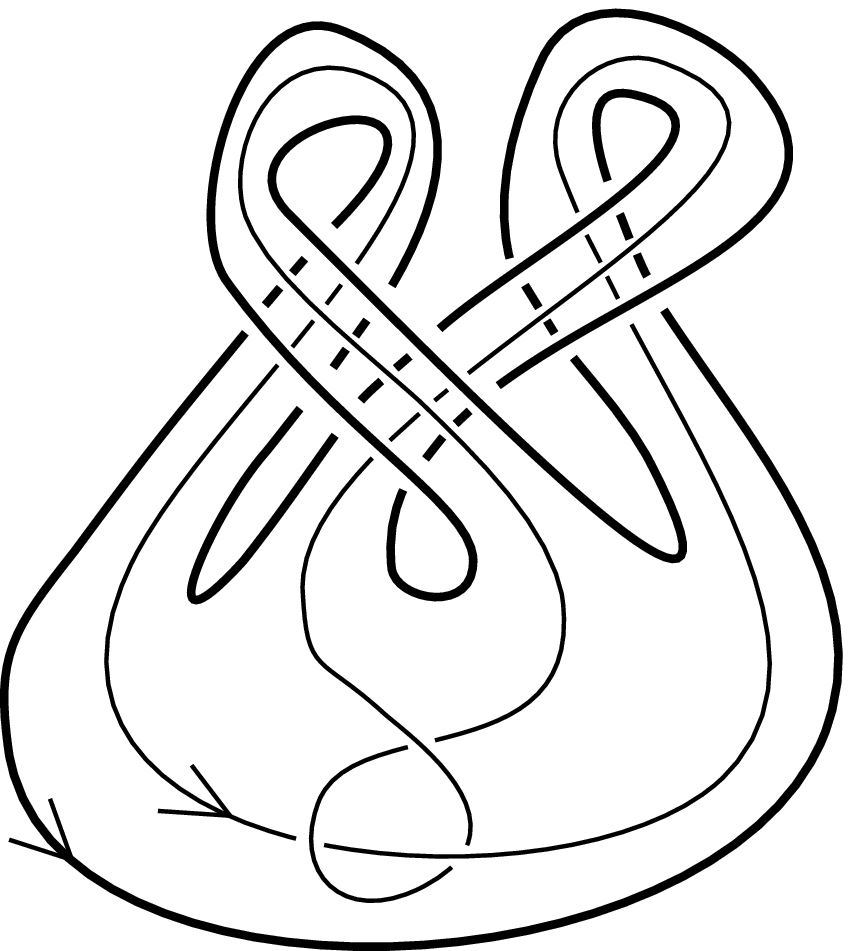 \end{array} & \to &\begin{array}{c}
\def\svgwidth{1.25in}
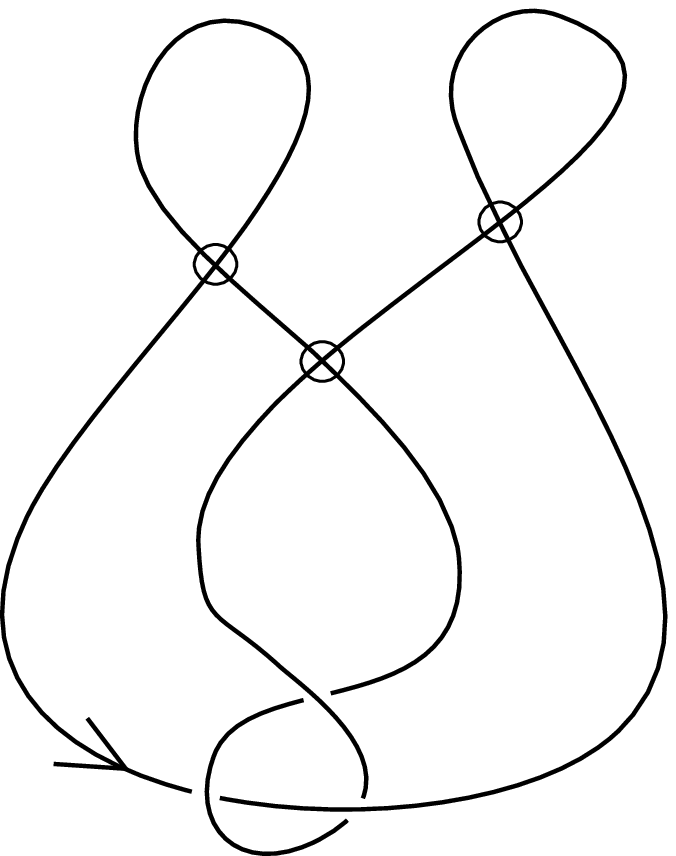\end{array}  \\ 
\end{array}
$}
\caption{(Left) A link $L=J \sqcup K$ in SSF with $J\leftrightharpoons 4_1$. An evident fiber of $J$ is depicted in disc-band form. (Right) The associated virtual knot.}\label{fig_virt_cov}
\end{figure}

\begin{theorem} \cite{chrisman2015virtual} Let $L_1=J_1 \sqcup K_1$ and $L_2=J_2 \sqcup K_2$ be $n+1$ component links in SSF, where $J_1,J_2$ are $n$-component fibered links. Then the associated virtual knots $\upsilon_1$ and $\upsilon_2$ for $L_1$ and $L_2$, respectively, are invariant. Moreover, if $L_1 \leftrightharpoons L_2$ as links, then $\upsilon_1 \leftrightharpoons \upsilon_2$ as virtual knots.
\end{theorem}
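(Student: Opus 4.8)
The plan is to show that the assignment $L \mapsto \upsilon$ descends to a well-defined map on ambient isotopy classes of ordered oriented SSF links, by verifying that every elementary move occurring in an ambient isotopy of $L$ induces on the virtual knot diagram of $\mathfrak{k}$ on $\Sigma_J$ either a trivial change or one of the extended Reidemeister, detour, or stabilization moves. Both assertions then follow simultaneously: the first (that $\upsilon_1,\upsilon_2$ are invariant) is the statement that $\upsilon$ is unaffected by the choices internal to the SSF construction, and the \emph{Moreover} clause is the statement that it is unaffected along an isotopy carrying $L_1$ to $L_2$.

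First I would pin down the canonical data underlying the construction. Since $J$ is a fibered $n$-component link, its fiber $\Sigma_J$ is a minimal genus Seifert surface and is unique up to isotopy rel $\partial N_J$; consequently the regular infinite cyclic cover $\Pi\colon(\Sigma_J\cap N_J)\times\mathbb{R}\to N_J$ and its product structure are canonical up to diffeomorphism. Because $K$ has algebraic intersection number zero with $\Sigma_J$ (equivalently $\text{lk}(J,K)=0$ in the two-component case), the class $[K]$ lies in $\ker\bigl(\pi_1(N_J)\to\mathbb{Z}\bigr)$, so $K$ admits a lift $\mathfrak{k}$, unique up to the $\mathbb{Z}$-action of the deck group. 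A deck transformation is a diffeomorphism of the thickened surface (translation in the $\mathbb{R}$ factor), so all lifts project to the same virtual knot; this disposes of the choice of lift, and fiber uniqueness disposes of the choice of $\Sigma_J$.

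Next I would fix a diagrammatic calculus for SSF links and use a general position argument to decompose an ambient isotopy $L_1\leftrightharpoons L_2$ into finitely many elementary moves, sorted by their interaction with the fiber: (i) isotopies of $K$ supported in $\nu\Sigma_J$ away from $J$, realizing ordinary Reidemeister moves of the diagram of $K$ on $\Sigma_J$; (ii) isotopies carrying an arc of $K$ transversally through $\Sigma_J$, i.e.\ a sheet change in the cover; (iii) isotopies of $J$ together with its fiber; and (iv) handle slides and (de)stabilizations of $\Sigma_J$. For (i) one checks directly that $RI,RII,RIII$ become $\Omega 1,\Omega 2,\Omega 3$. For (iii) the isotopy of $J$ extends to an ambient isotopy carrying $\Sigma_J$ to a new fiber, inducing a diffeomorphism of the cover under which $\mathfrak{k}$ is transported, leaving $\upsilon$ unchanged. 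For (iv) the move is exactly a (de)stabilization, which is built into the equivalence relation on virtual knots.

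The hard part will be case (ii): an arc of $K$ passing through the fiber. Here the lift $\mathfrak{k}$ jumps between adjacent sheets of $(\Sigma_J\cap N_J)\times\mathbb{R}$, and one must show that the net effect on the projected diagram is a detour move, so that only virtual crossings are created or destroyed while no classical crossing or its sign is altered. The key is to track the arc through a collar $\Sigma_J\times(-\epsilon,\epsilon)$ and argue that, after projecting, the arc may be rerouted across the surface with all new double points virtual, which is precisely the content of the detour move. I would also need to arrange the general position decomposition so that the link remains in, or can be returned to, SSF at every intermediate stage, so that $\upsilon$ is defined throughout; this is where the greatest care is required, since a naive isotopy can temporarily destroy the SSF structure. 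Assembling the per-move invariances along the decomposition then yields $\upsilon_1\leftrightharpoons\upsilon_2$ and completes the proof.
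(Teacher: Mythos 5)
Note first that the paper does not prove this theorem itself; it is imported from \cite{chrisman2015virtual}, where the argument is covering-space theoretic rather than diagrammatic: an ambient isotopy $L_1 \leftrightharpoons L_2$ yields an orientation-preserving diffeomorphism of exteriors taking $J_1$ to $J_2$ and $K_1$ to $K_2$; by uniqueness of the fibration of a fibered link exterior this may be taken fiber-preserving; it then lifts to the infinite cyclic covers $\Sigma_{J_i} \times \mathbb{R}$, carrying the lift $\mathfrak{k}_1$ to $\mathfrak{k}_2$ up to a deck translation, and equality of the virtual knots follows from the correspondence between virtual knots and knots in thickened surfaces modulo orientation-preserving diffeomorphism and stabilization. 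Your move-by-move decomposition is a genuinely different route, but it contains a real gap in exactly the place you identify as the hard part. Your case (ii) rests on a wrong picture of the cover: when an isotopy pushes an arc of $K$ transversally through the fiber, the lift $\mathfrak{k}$ does not ``jump between adjacent sheets.'' The covering is fixed, so by the covering homotopy property any isotopy of $K$ in $N_J$ (with $J$ and the fibration fixed) lifts to a \emph{continuous} isotopy of $\mathfrak{k}$ in $\Sigma_J \times \mathbb{R}$; the preimage of the fiber is the family of levels $\Sigma_J \times \mathbb{Z}$, and crossing a level is a non-event upstairs. Thus your cases (i) and (ii) are handled uniformly, with only $\Omega 1$--$\Omega 3$ on the surface diagram appearing. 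Moreover, your intended repair --- rerouting the projected arc ``with all new double points virtual'' --- is not meaningful at this stage: every double point of the projection of $\mathfrak{k}$ to $\Sigma_J$ is a classical crossing; virtual crossings and the detour move arise only later, when the surface diagram is immersed in the plane, and they are absorbed by the diffeomorphism/stabilization equivalence, not by isotopies of $K$. Relatedly, your case (iv) is spurious: the fiber of a fibered link is unique up to isotopy, so there are no independent handle slides or (de)stabilizations of $\Sigma_J$ to check.

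The second genuine gap is the one you flag but do not close: your plan requires the link to remain in, or be returnable to, SSF at every intermediate stage so that $\upsilon$ is defined throughout, and you offer no mechanism for arranging this. The cited proof dissolves the difficulty by proving invariance at the level of virtual covers: the associated virtual knot is defined for \emph{any} $J \sqcup K$ with $J$ fibered and $K$ of algebraic intersection number zero with a fiber (no SSF needed), and SSF enters only as a device for computing $\upsilon$ from a link diagram. As written, your argument would establish at best invariance under isotopies that happen to preserve SSF, which is strictly weaker than the theorem. The global ingredients you would still need to make the diagrammatic route work --- uniqueness of the fibration (not merely of the fiber surface) and the lifting of a fiber-preserving diffeomorphism to the covers --- are precisely what the covering-space proof supplies directly, which is why that approach is both shorter and more robust here.
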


For geometric applications of virtual covers to links, knots in $3$-manifolds, and link concordance, the reader should consult \cite{chrisman2014virtual,chrisman2015virtual, chrisman2013fibered}.

\subsection{Almost Classical Knots} \label{sec_ac} A virtual knot diagram $\upsilon$ is said to be \emph{Alexander numerable} if the arcs between the classical crossings may be labeled by integers $\lambda_1,\cdots,\lambda_n$ so that the consistency equations in Figure \ref{fig_ac_defn} are satisfied at each classical crossing. A virtual knot that admits an Alexander numerable diagram is said to be an \emph{almost classical knot}. Every classical knot is almost classical but not every almost classical knot is classical. Almost classical knots were first introduced as an object of independent study by Silver-Williams \cite{silwill_AC}. An equivalent condition for a virtual knot diagram to be Alexander numerable is that the index every crossing is zero (see below, Section \ref{sec_milnor}). The equivalence of the two definitions follows, for example, from the Cairns-Elton criterion \cite{cairnselton}. 
\newline

\begin{figure}[htb]
\fcolorbox{black}{white}{
$\begin{array}{cc} \\
\begin{array}{c}
\def\svgwidth{1.2in}
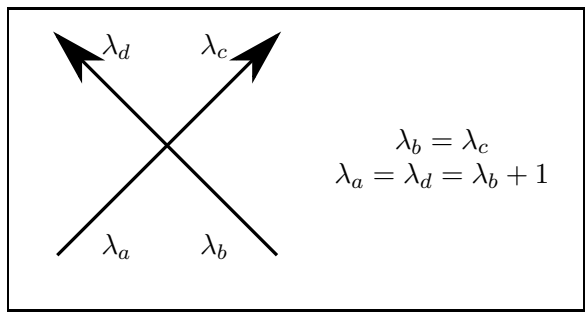 \end{array} & \begin{array}{c} \lambda_b=\lambda_c \\ \lambda_a=\lambda_d=\lambda_b+1\end{array} \\ \\
\end{array}
$}
\caption{Alexander numbering of a knot diagram.}\label{fig_ac_defn}
\end{figure}

Almost classical knots are the virtual knots for which the concept of Seifert surfaces makes sense. Indeed, Silver-Williams observed that certain arguments in classical knot theory that utilize Seifert surfaces can be extended to virtual knots when they have an Alexander numbering. Boden-et al.\cite{boden2015alexander} showed that a knot $\mathfrak{k}$ in a thickened surface $\Sigma \times [0,1]$ bounds an oriented compact surface $F$ if and only if it has a diagram on $\Sigma$ that is Alexander numerable. Such a knot projects to a virtual knot that is almost classical. 

Furthermore, Boden-et al.\cite{boden2015alexander} gave an algorithm for constructing a spanning surface $F$ for any homologically trivial knot in a thickened surface $\Sigma \times [0,1]$. The algorithm mirrors the Seifert surface algorithm for classical knots. First one performs the oriented smoothing at each crossing of the knot diagram on $\Sigma$. The result is a set $\{\gamma_1,\ldots,\gamma_n\}$ of simple oriented closed curves on $\Sigma$. Since $\gamma_1 \cup \cdots \cup \gamma_n$ is homologically trivial, there is a collection of oriented connected subsurfaces $S_1,\ldots,S_m$ of $\Sigma$ such that $\partial S_j \ne \emptyset$ for $1 \le j \le m$ and $\bigcup_{j=1}^m \partial S_j =\bigcup_{i=1}^n \gamma_i$. The spanning surface $F$ is obtained from $S_1,\ldots,S_m$ by placing overlapping subsurfaces at different heights and gluing in half-twisted bands at the smoothed crossings of $\mathfrak{k}$. This last step is the familiar Seifert surface algorithm. It is important to note that in the virtual case the subsurfaces $S_1,\ldots,S_m$ may have any genus and any number ($\ge 1$) of boundary components. The algorithm for classical knots, on the other hand, uses only discs. 

Recall that the Alexander polynomial of a classical knot $K$ can be computed from a Seifert surface $\Sigma_K$ of genus $g_K$. Let $a_1,\cdots, a_{2g_K}$ be a collection of simple closed curves on $\Sigma_K$ representing a basis for $H_1(\Sigma_K;\mathbb{Z})$. Such a collection of simple closed curves is often called a \emph{canonical system}. Let $V=(\text{lk}(a_i^{-},a_j))$ be the $2g_K \times 2g_K$ matrix of linking numbers, where $a^{\pm}$ denotes the $\pm$ push-off of $a$ into $\mathbb{S}^3\smallsetminus\nu \Sigma_K$. Then the Alexander polynomial is given by $\Delta_K(t)=\det(tV-V^\uptau)$, which is well defined up to a multiple of powers of $t^{\pm 1}$. For all versions of the Alexander polynomial used in this paper, this indeterminacy is denoted by $\doteq$. 

Since almost classical knots have a Seifert surface in some thickened surface, it also possible to define an Alexander polynomial for almost classical knots. To do this, it is first necessary to have a definition of the linking number in $\Sigma \times [0,1]$. Let $\mathfrak{k}_1,\mathfrak{k}_2$ be knots in $\Sigma \times [0,1]$. Then $H_1(\Sigma \times [0,1]\smallsetminus \nu (\mathfrak{k}_1),\Sigma \times 1)$ is infinite cyclic and generated by a meridian $\mu$ of $\mathfrak{k}_1$ (cf. \cite{boden2015virtual}, Proposition 7.1). Then $[\mathfrak{k}_2]=\alpha \cdot \mu$ for some $\alpha \in \mathbb{Z}$. Define the \emph{linking number of} $\mathfrak{k}_1$ \emph{and} $\mathfrak{k}_2$ \emph{in} $\Sigma \times [0,1]$ to be $\text{lk}_{\Sigma}(\mathfrak{k}_1,\mathfrak{k}_2)=\alpha$. It is important to note that the $\text{lk}_{\Sigma}$ is not symmetric. However, we have the following relation (see Cimasoni-Turaev \cite{ct}, Section 1.2):
\[
\text{lk}_{\Sigma}(\mathfrak{k}_1,\mathfrak{k}_2)-\text{lk}_{\Sigma}(\mathfrak{k}_2,\mathfrak{k}_1)=p_*([\mathfrak{k}_1]) \cdot p_*([\mathfrak{k}_2]),
\]
where $p:\Sigma \times [0,1] \to \Sigma$ is projection onto the first factor and $\cdot$ represents the intersection form on $\Sigma$. The consequence of this asymmetry is that two Seifert matrices are needed to define the Alexander polynomial of almost classical knots. Let $\Sigma_{\mathfrak{k}}$ be a Seifert surface of genus $g_{\mathfrak{k}}$ for $\mathfrak{k}$ in $\Sigma \times [0,1]$ and let $a_1,\ldots,a_{2g_{\mathfrak{k}}}$ be a canonical system of curves on $\Sigma_{\mathfrak{k}}$. Then the $\pm$-Seifert matrices $V^{\pm}$ are given by the $2g_{\mathfrak{k}} \times 2g_{\mathfrak{k}}$ matrices $V^{\pm}=(\text{lk}_{\Sigma}(a_i^{\pm},a_j))$.

\begin{definition}[Alexander polynomial of almost classical knots \cite{boden2015virtual}] Let $\upsilon$ be an almost classical knot. Let $\mathfrak{k}$ be a homologically trivial knot in $\Sigma \times [0,1]$ representing $\upsilon$. Let $\Sigma_{\mathfrak{k}}$ be a Seifert surface for $\mathfrak{k}$, $g_{\mathfrak{k}}$ the genus of $\Sigma_{\mathfrak{k}}$, and $V^{\pm}$ the $\pm$-Seifert matrices relative to some set of simple closed curves $\{a_1,\ldots,a_{2g_{\mathfrak{k}}}\}$ representing a basis of $H_1(\Sigma_{\mathfrak{k}};\mathbb{Z})$. Define a polynomial: 
\[
\overline{\Delta}_{\Sigma_{\mathfrak{k}}}(t)=\det(tV^--V^+).
\]
The \emph{Alexander polynomial} of the almost classical knot $\upsilon$, denoted $\overline{\Delta}_{\upsilon}(t)$, is the element of $\mathbb{Z}[t,t^{-1}]/\left<\pm t^k-1:k \in \mathbb{Z}\right>$ defined by $\overline{\Delta}_{\upsilon}(t)\doteq \overline{\Delta}_{\Sigma_{\mathfrak{k}}}(t)$.
\newline 
\end{definition}

By \cite{boden2015virtual}, Section 7, $\Delta_{\upsilon}(t)$ is a well-defined invariant of almost classical knots that is independent of the choice of $\Sigma$, $\mathfrak{k}$, and $\Sigma_{\mathfrak{k}}$. For any classical knot $K$, $\overline{\Delta}_K(t)=\Delta_K(t)$. As in the classical case, $\overline{\Delta}_{\upsilon}(t)$ is a generator of the first elementary ideal of the Alexander module (see \cite{boden2015alexander}, Corollary 7.3). The polynomial $\overline{\Delta}_{\Sigma_{\mathfrak{k}}}(t)$ is not an invariant of $\upsilon$ itself, but it is independent of the choice of basis $\{a_1,\ldots,a_{2g_{\mathfrak{k}}}\}$. Indeed, a change to another basis gives matrices $M^{\uptau} V^+ M$ and $M^{\uptau}V^-M$, where $M$ is an integral unimodular matrix.

\subsection{Defining the MVAP}

The multi-variable Alexander polynomial of a two component link in $\mathbb{S}^3$ (abbreiviated as MVAP) is the natural extension of the Alexander polynomial of a knot. That is, the multi-variable Alexander polynomial derives from a pair of Seifert forms on the universal abelian cover of of the link complement \cite{cooper1982signatures}. For an arbitrary link, this can be computed from a $2$-complex made from a union of Seifert surfaces of the components. 

In the case of a boundary link the computation is particularly simple. Recall that a link $J \sqcup K$ is a boundary link if there are Seifert surfaces $\Sigma_J$ of $J$ and $\Sigma_K$ of $K$ such that $\Sigma_J \cap \Sigma_K=\emptyset$. The Seifert forms can then be computed using linking numbers of canonical systems of curves on the disjoint Seifert surfaces. At the level of the first elementary ideal of the Alexander module, the MVAP of a boundary link vanishes. However, Gutierrez \cite{gutierrez} showed that the Alexander polynomial associated to the Seifert form of a boundary link is a generator of the second elementary ideal of the Alexander module. These are not always vanishing for boundary links and hence serve as useful invariants. To make this distinction clear, we will henceforth use the notation $\nabla_{J,K}$, rather than $\Delta_L$, to denote the MVAP corresponding to a generator of the second elementary ideal.

Friedl \cite{friedl2006algorithm} gave a succinct description of $\nabla_{J,K}$ for boundary links. Consider a boundary link $J \sqcup K$ and suppose we have Seifert surfaces $\Sigma_J,\Sigma_K$, such that $J=\partial \Sigma_J$, $K=\partial \Sigma_K$ and $\Sigma_J \cap \Sigma_K=\emptyset$. If the genera of $\Sigma_J,\Sigma_k$ are $g_1,g_2$, respectively, then we may form a symplectic basis for $H_1(\Sigma_J \sqcup \Sigma_K;\mathbb{Z})$ from simple closed curves $a_{1},\ldots,a_{2g_{1}}$ on $\Sigma_J$ and $a_{2g_1+1},\ldots, a_{2(g_1+g_2)}$ on $\Sigma_K$. Let $A$ be the Seifert matrix of $J \sqcup K$, so that  $(i,j)$-entry is:
\[
A_{i,j}=\text{lk}(a_{i}^{-},a_{j}).
\]
Notice that letting $A_{J}$ and $A_{K}$ be Seifert matrices for each of the link components individually we see that $A$ may be written in block form as:
\begin{equation} \label{eqn_seif_mat}
A=\left[\begin{array}{c|c} A_J & B \\ \hline B^\uptau & A_K\end{array}\right]
\end{equation}
Recall that $H_{1}(\mathbb{S}^3\smallsetminus \nu (J \sqcup K);\mathbb{Z})\cong \mathbb{Z}\oplus \mathbb{Z}$ and is generated by meridians $t_1$ of $J$ and $t_2$ of $K$. Let $\widetilde{X}$ denote the universal abelian cover of the link complement. Then $H_1(\widetilde{X})$ is a finitely generated $\mathbb{Z}[\mathbb{Z}\oplus \mathbb{Z}]$-module, where we identify $\mathbb{Z}[\mathbb{Z}\oplus\mathbb{Z}]$ with $\Lambda_2=\mathbb{Z}[t_1^{\pm 1},t_2^{\pm 1}]$. Now, let $T$ be the matrix whose $(i,j)$-entry is:
\begin{equation*}
T_{i,j}=
\begin{cases}
t_{1} &  i=j\ \text{and}\ 1 \leq i\leq 2g_{1}\\
t_{2} &  i=j\ \text{and}\ 2g_{1} +1 \leq i \leq 2(g_{1}+g_{2})\\
0 & \text{otherwise}.
\end{cases}
\end{equation*}
Gutierrez proved that $H_1(\widetilde{X})\cong \Lambda_2 \oplus \frac{\Lambda_2^{m}}{(AT-A^{\uptau})\Lambda_2^m}$, where $m=2(g_1+g_2)$ (\cite{gutierrez}, Corollary 3). Then $\det(AT-A^{\uptau})$ provides the generator $\nabla_{J,K}$ discussed above. We record this in the following definition. 

\begin{definition}[MVAP of a boundary link] Let $L=J \sqcup K$ be a two component boundary link and $\Sigma_J, \Sigma_K$ disjoint Seifert surfaces of $J,K$, respectively. Define a polynomial $\nabla_{\Sigma_J,\Sigma_K}(t_1,t_2) \in \mathbb{Z}[t_1,t_2]$ by:
\[
\nabla_{\Sigma_J,\Sigma_K}(t_{1},t_{2})=\det(AT-A^{\uptau}),
\]
where $A$ and $T$ are as defined above. The \emph{multi-variable Alexander polynomial} (MVAP) of $J \sqcup K$, denoted $\nabla_{J,K}(t_1,t_2)$ is defined by $\nabla_{J,K}(t_1,t_2)\doteq \nabla_{\Sigma_J,\Sigma_K}(t_1,t_2)$, where the polynomial is well-defined up to multiplication by units of $\Lambda_2=\mathbb{Z}[t_1^{\pm 1}, t_2^{\pm 1}]$.  The MVAP is an invariant of boundary links and is independent of the choice of $\Sigma_J,\Sigma_K$ (see \cite{friedl2006algorithm}, Proposition 1.1).
\end{definition}

\section{Relating virtual Alexander polynomials to the MVAP} \label{sec_alex}

\subsection{Theorem statement} Our main goal is to relate invariants of the associated virtual knot of a virtual cover to standard invariants of the link to which it corresponds. To that end we offer the following theorem. 

\begin{theorem}\label{thm_main_alex} Let $J$ be a fibered knot. Suppose $L=J \sqcup K$ is a two component boundary link such that $K$ bounds a Seifert surface $\Sigma_K$ disjoint from a fiber $\Sigma_J$ of $J$. Let $g_K$ the genus of $\Sigma_K$. Then the associated virtual knot $\upsilon$ to $L$ is almost classical and we have:
\begin{eqnarray}
\overline{\Delta}_{\upsilon}(t)  &\doteq & \overline{\Delta}_{\Sigma_K}(t) = \pm t^{2g_K} \cdot \nabla_{\Sigma_J,\Sigma_K}(0,t^{-1}), \\
\Delta_J(t) &\doteq & \nabla_{J,K}(t,1), \text{ and} \\ 
\Delta_K(t) & \doteq &\nabla_{J,K}(1,t)
\end{eqnarray}
Moreover, the $\pm$ sign in (2) is determined by $\det(A_J)=\pm 1$, where $A_J$ is a Seifert matrix for the Seifert surface $\Sigma_J$.
\end{theorem}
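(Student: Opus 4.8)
The plan is to prove the whole theorem by a single determinant computation whose only nontrivial input is an explicit dictionary between the $\pm$-Seifert matrices of $\mathfrak{k}$ in the thickened surface and the block decomposition of the Seifert matrix $A$ of $L$. First I would record the geometry. Since $L$ is a boundary link, $\text{lk}(J,K)=0$, so the construction of Section~\ref{sec_vc} produces a virtual cover $(\mathfrak{k}^{\Sigma_J\times\mathbb{R}},\Pi,K)$ with associated virtual knot $\upsilon$. Because $\Sigma_K$ is disjoint from the fiber $\Sigma_J$, it lies in a single fundamental domain of $\Pi\colon\Sigma_J\times\mathbb{R}\to N_J$ and hence lifts homeomorphically to a surface $\mathfrak{F}\subset\Sigma_J\times\mathbb{R}$ with $\partial\mathfrak{F}=\mathfrak{k}$. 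This exhibits $\mathfrak{k}$ as homologically trivial, so $\upsilon$ is almost classical, and $\mathfrak{F}$ is a Seifert surface of genus $g_K$ for $\mathfrak{k}$. A canonical system $a_1,\dots,a_{2g_K}$ on $\Sigma_K$ lifts to a canonical system $\alpha_1,\dots,\alpha_{2g_K}$ on $\mathfrak{F}$. Since $\overline{\Delta}_{\upsilon}(t)$ is independent of the chosen Seifert surface, computing it from $\mathfrak{F}$ gives the first equality $\overline{\Delta}_{\upsilon}(t)\doteq\overline{\Delta}_{\Sigma_K}(t)=\det(tV^--V^+)$, where $V^{\pm}=\bigl(\text{lk}_{\Sigma}(\alpha_i^{\pm},\alpha_j)\bigr)$.

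The technical heart, and the step I expect to be the main obstacle, is the following linking-number lemma. Writing $A=\left[\begin{smallmatrix}A_J&B\\ B^{\uptau}&A_K\end{smallmatrix}\right]$ as in \eqref{eqn_seif_mat} and setting $C=B^{\uptau}(A_J^{\uptau})^{-1}B$, I claim that with the orientation conventions of the virtual cover one has
\[
V^+=A_K-C,\qquad V^-=A_K^{\uptau}-C.
\]
The inverse $(A_J^{\uptau})^{-1}$ exists precisely because $J$ is fibered, so that $\det(A_J)=\pm1$; this is exactly where the fibered hypothesis enters. To prove the lemma I would compare linking in the cover with linking in $\mathbb{S}^3$. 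Recall that $(A_K)_{ij}=\text{lk}(a_i^-,a_j)$ and $(A_K^{\uptau})_{ij}=\text{lk}(a_i^+,a_j)$. A Seifert surface in $N_J$ for $a_i$ fails to lift to a closed surface bounding $\alpha_i$ by an amount dictated by how $a_i$ winds around $J$, data recorded by the block $B$; the identification of the levels of the infinite cyclic cover is governed by the monodromy of the fibration, whose action on $H_1(\Sigma_J)$ is controlled by $A_J^{-1}A_J^{\uptau}$. Bookkeeping the level-shifted intersection contributions produces precisely the correction $C$, and tracking the transverse orientation of the fiber through $\Pi$ forces the sign flip that pairs the thickened-surface $+$ push-off with the $\mathbb{S}^3$ $-$ push-off. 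As a consistency check, $C$ is common to $V^+$ and $V^-$, so $V^--V^+=A_K^{\uptau}-A_K$ is the intersection form of $\Sigma_K$, exactly as the two push-offs of $\alpha_i$ must link $\alpha_j$ with difference $\alpha_i\cdot\alpha_j$.

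Granting the lemma, the remaining work is formal. Because $\det(A_J)=\pm1$, the $(1,1)$ block $-A_J^{\uptau}$ of $(AT-A^{\uptau})|_{t_1=0}$ is invertible, and a Schur complement gives
\[
\nabla_{\Sigma_J,\Sigma_K}(0,t_2)=\det(A_J)\det\bigl(t_2V^+-V^-\bigr).
\]
Substituting $t_2=t^{-1}$, extracting a factor $t^{-2g_K}$, and using $\det(V^+-tV^-)=\det(tV^--V^+)=\overline{\Delta}_{\Sigma_K}(t)$ yields
\[
\overline{\Delta}_{\Sigma_K}(t)=\det(A_J)\,t^{2g_K}\,\nabla_{\Sigma_J,\Sigma_K}(0,t^{-1}),
\]
which is the main equation together with the assertion that the sign is $\det(A_J)$.

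For the two specializations I would exploit the block structure directly. Setting $t_2=1$ makes $AT-A^{\uptau}$ block lower triangular, so its determinant factors as $\det(t_1A_J-A_J^{\uptau})\det(A_K-A_K^{\uptau})$; since $A_K-A_K^{\uptau}$ is the intersection form of the once-punctured surface $\Sigma_K$ it has determinant $1$, giving $\nabla_{J,K}(t,1)\doteq\Delta_J(t)$. Symmetrically, setting $t_1=1$ makes the matrix block upper triangular, and $\det(A_J-A_J^{\uptau})=1$ gives $\nabla_{J,K}(1,t)\doteq\Delta_K(t)$. Throughout, the only genuinely delicate point is the linking-number lemma of the second paragraph; once the correction term $C$ and its sign are established, the determinant identities and the identification of the overall sign with $\det(A_J)$ are routine.
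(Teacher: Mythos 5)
Your proposal follows essentially the same route as the paper's proof: lift $\Sigma_K$ into the infinite cyclic cover to get almost classicality, establish a linking-number dictionary expressing $V^{\pm}$ through the blocks of $A$ (the paper does this via Lemma \ref{lemma_linking}, proved by a Mayer--Vietoris computation rather than your monodromy bookkeeping), then take the Schur complement of $(AT-A^{\uptau})$ at $t_1=0$, extract $t^{2g_K}$, and read off the specializations $t_2=1$, $t_1=1$ from block triangularity. The one discrepancy is that your dictionary $V^+=A_K-B^{\uptau}(A_J^{\uptau})^{-1}B$, $V^-=A_K^{\uptau}-B^{\uptau}(A_J^{\uptau})^{-1}B$ is the entrywise transpose of what the paper actually proves, namely $V^-=A_K-B^{\uptau}A_J^{-1}B$ and $V^+=A_K^{\uptau}-B^{\uptau}A_J^{-1}B$, so under the paper's stated push-off conventions your $\pm$ assignment is swapped; this is harmless here because $\det\bigl(tV^--V^+\bigr)$ is transpose-invariant (the paper handles the same issue by an explicit transpose step after the Schur complement), and your final formula and the identification of the sign with $\det(A_J)$ agree with the paper's.
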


\begin{remark}  Relations (3) and (4) are easy consequences of the definition and are stated in the theorem simply for the purposes of comparison. They are similar in form to the well-known Torres conditions (e.g. see \cite{kawauchi}, Theorem 7.4.1).
\end{remark}

\subsection{Supporting lemmas} The key idea in the proof of Theorem \ref{thm_main_alex} is to relate linking numbers in $\mathbb{S}^{3}$ to linking numbers in $\Sigma_{J} \times I$, where $\Sigma_J$ is a fiber of $J$. This will allow us to compare the respective Seifert forms. This is accomplished in the following lemma. Recall that since $J$ is fibered, $\mathbb{S}^3\smallsetminus \nu\Sigma_J$ is diffeomorphic to $\Sigma_J \times [0,1]$. Removing a tubular neighborhood of $\Sigma_J$ leaves two copies of $\Sigma_J$, denoted $\Sigma_J^{\pm}$, corresponding to the $\pm$ push-offs of $\Sigma_J$, respectively. We will identify $\Sigma_J^{+}$ with $\Sigma_J\times\{1\}$ and $\Sigma_J^-$ with $\Sigma_J \times \{0\}$. Thus, $\Sigma_J^+$ is the ``top'' of the thickened surface $\Sigma_J \times [0,1]$.

\begin{lemma} \label{lemma_linking}
Let $J$ be a fibered knot and $\Sigma_{J}$ a fiber for $J$ of genus $g$. Suppose that 
$y$ is a knot in $\mathbb{S}^{3}\smallsetminus \nu \Sigma_{J}$ and that $x$ is a knot in $(\mathbb{S}^{3}\smallsetminus\nu\Sigma_{J})\smallsetminus \nu (y)$. Let $\beta=\{a_{1}, \ldots, a_{2g}\}$ be a basis for the first homology of $\Sigma_{J}$. For an arbitrary knot $z$ in $\mathbb{S}^{3}\smallsetminus\nu\Sigma_{J}$ let $l_{z}^{\uptau}=(lk(a_{1},z),\ldots, lk(a_{2g},z)))$. Then:
\[
lk_{\Sigma_{J}}(y,x)=lk(y,x)-l_{y}^{\uptau}A_J^{-1}l_{x},
\]
where $A_J$ is the Seifert matrix for $\Sigma_{J}$ with respect to $\beta$.

\end{lemma}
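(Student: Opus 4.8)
The plan is to carry out every computation inside $W := \mathbb{S}^3 \smallsetminus \nu\Sigma_J$, which by fiberedness is diffeomorphic to $\Sigma_J \times [0,1]$ with $\Sigma_J^-=\Sigma_J\times\{0\}$ and $\Sigma_J^+=\Sigma_J\times\{1\}$. Both $x$ and $y$ lie in the interior of $W$ and are disjoint from $\Sigma_J$. I would first record that $A_J$ is invertible over $\mathbb{Z}$: since $J$ is fibered, $\Delta_J(t)=\det(tA_J-A_J^{\uptau})$ is monic, so $\det A_J=\pm\Delta_J(0)=\pm 1$, which is what makes $A_J^{-1}$ in the statement meaningful. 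The strategy is then to compute $lk(y,x)$ in $\mathbb{S}^3$ by means of a $2$-chain in $W$ bounded by $x$ together with a surface representative of its homology class, and afterwards to recognize the intersection number of that chain with $y$ as exactly $lk_{\Sigma_J}(y,x)$.

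The first step is to identify the class of $x$ in $H_1(W)\cong H_1(\Sigma_J;\mathbb{Z})=\mathbb{Z}^{2g}$ in terms of the linking vector $l_x$. Writing $[x]=\sum_k \xi_k[a_k]$, I would represent $\sum_k\xi_k a_k$ by a cycle $c$ pushed slightly to the positive side of $\Sigma_J$, so that $x-c=\partial D$ for a $2$-chain $D\subset W$. Because $a_i\subset\Sigma_J$ is disjoint from $W$, the chain $D$ lies in $\mathbb{S}^3\smallsetminus\nu a_i$, whence $lk(a_i,x)=lk(a_i,c)$. Evaluating $lk(a_i,c)=\sum_k\xi_k\,lk(a_i,a_k^+)$ and invoking the standard Seifert identity $lk(a_i,a_k^+)=lk(a_i^-,a_k)=(A_J)_{ik}$ gives $l_x=A_J\xi$, i.e. $\xi=A_J^{-1}l_x$. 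The same computation applied to $y$ produces its class $\eta=A_J^{-1}l_y$.

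For the second step I would choose the surface representative $c_x=\sum_k\xi_k a_k$ to lie on the top copy $\Sigma_J^+$; since $\Sigma_J^+\hookrightarrow W$ is a homotopy equivalence, $c_x$ still represents $[x]$, so $x-c_x=\partial S$ for a $2$-chain $S\subset W$. As $x$ and $c_x$ are disjoint from $y$, the defining property of the linking number in $\mathbb{S}^3$ yields $lk(y,x)-lk(y,c_x)=y\cdot S$, where the right-hand side is the algebraic intersection number; moreover $lk(y,c_x)=\sum_k\xi_k\,lk(y,a_k)=l_y^{\uptau}\xi=l_y^{\uptau}A_J^{-1}l_x$, because each $a_k^+$ is isotopic to $a_k$ in the complement of $y$. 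It then remains to identify $y\cdot S$ with $lk_{\Sigma_J}(y,x)$. Excising $\nu y$ from $S$ produces a chain in $W\smallsetminus\nu y$ whose boundary is $x-c_x-(y\cdot S)\mu_y$, so in $H_1(W\smallsetminus\nu y)$ one has $[x]=[c_x]+(y\cdot S)[\mu_y]$. Passing to the relative group $H_1(W\smallsetminus\nu y,\Sigma_J^+)\cong\mathbb{Z}\langle\mu_y\rangle$ that defines $lk_{\Sigma_J}$, the class $[c_x]$ dies because $c_x\subset\Sigma_J^+$, leaving $[x]=(y\cdot S)[\mu_y]$ and hence $lk_{\Sigma_J}(y,x)=y\cdot S$. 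Combining the two expressions for $y\cdot S$ gives the asserted formula.

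I expect the delicate point to be this last identification, where the asymmetry of $lk_{\Sigma_J}$ enters: the surface representative must be placed on the distinguished top $\Sigma_J^+$ so that it contributes nothing to the relative class, and the $\pm$ push-off bookkeeping in the Seifert identity must be tracked precisely to land on $A_J^{-1}$ rather than $(A_J^{\uptau})^{-1}$. As a check on both the formula and the sign conventions, I would verify that it reproduces the Cimasoni--Turaev relation: since $A_J-A_J^{\uptau}$ represents the intersection form of $\Sigma_J$ and $\xi,\eta$ are the homology coordinates of $x,y$, the formula yields $lk_{\Sigma_J}(y,x)-lk_{\Sigma_J}(x,y)=\eta^{\uptau}(A_J-A_J^{\uptau})\xi=p_*([y])\cdot p_*([x])$, exactly as required.
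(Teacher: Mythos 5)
Your proof is correct and follows essentially the same route as the paper: both arguments hinge on the fact that the Seifert matrix realizes the positive push-off map $H_1(\Sigma_J)\to H_1(\mathbb{S}^3\smallsetminus\nu\Sigma_J)$ and is unimodular because $J$ is fibered (giving the coordinates $\xi=A_J^{-1}l_x$ for the projection of $[x]$ to the fiber), followed by splitting $[x]$ in the complement of $\Sigma_J\sqcup y$ into a meridian coefficient---identified with $\mathrm{lk}_{\Sigma_J}(y,x)$ by the definition of the surface linking number---plus a class carried by $\Sigma_J^+$, and then converting everything into $\mathbb{S}^3$-linking numbers. The paper carries out this conversion via a Mayer--Vietoris splitting, the dual basis $\{a_i^*\}$, and the inclusion-induced map $H_1(\mathbb{S}^3\smallsetminus\nu(\Sigma_J\sqcup y))\to H_1(\mathbb{S}^3\smallsetminus\nu(J\sqcup y))$ where you use explicit $2$-chains, intersection numbers, and excision (and you additionally make explicit, via the excision step and the Cimasoni--Turaev consistency check, sign bookkeeping the paper leaves implicit), but this is a presentational rather than a substantive difference.
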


\begin{proof} We may assume without loss of generality that $\Sigma_J$ is in disc-band form. Furthermore, assume that the elements $a_i$ of $\beta$ are represented by simple closed curves on $\Sigma_J$, also denoted $a_i$. Each $a_i$ will be assumed to pass along the core of exactly one band. A basis for $H_1(\mathbb{S}^3\smallsetminus\nu\Sigma_J)$ is given by $\beta^*=\{a_1^*,\ldots,a_{2g}^*\}$, where each $a_j^*$ is an unknot encircling the band of $a_j$ and oriented so that $\text{lk}(a_j,a_j^*)=1$. Thus, given any $[z] \in H_1(\mathbb{S}^3\smallsetminus \nu\Sigma_J)$, we may write:
\[
[z]=\sum_{i=1}^{2g} \text{lk}(a_i,z) [a_i^*].
\] 
By a Mayer-Vietoris argument ($\mathbb{Z}$ coefficients everywhere), we have the following decomposition:
\[
H_1(\mathbb{S}^3\smallsetminus\nu(\Sigma_J \sqcup y)) \cong H_1(\Sigma_J^+) \oplus H_1(\mathbb{S}^3\smallsetminus\nu(\Sigma_J \sqcup y),\Sigma_J^+).
\]
After substituting in our conventions, we have the decomposition:
\[
H_1(\Sigma_J \times [0,1]\smallsetminus \nu (y)) \cong H_1(\Sigma_J \times \{1\}) \oplus H_1(\Sigma_J \times [0,1]\smallsetminus \nu(y) ,\Sigma_J \times \{1\}).
\]
As previously discussed in Section \ref{sec_ac}, the second factor is freely generated by a meridian $\mu$ of $y$. Hence, $H_1(\mathbb{S}^3\smallsetminus\nu(\Sigma_J \sqcup y))$ is freely generated by $\{\mu,a_1,\ldots,a_{2g}\}$. More exactly, the summand $H_1(\Sigma_J^+)$ is generated by copies $\{a_1^+,\ldots,a_{2g}^+\}$ on $\Sigma_J^+$, but this notation will be hereafter suppressed.  Now writing $[x]$ in this basis, we have:  
\[
[x]=r_0[\mu]+\sum_{i=1}^{2g} r_i [a_i].
\]
For $[z]\in H_1(\Sigma_J)$, let $[z]_{\beta}$ denote the coordinate vector of $[z]$ in the basis $\beta$. Then $A_J \cdot [z]_{\beta}$ is the element of $H_1(\mathbb{S}^3\smallsetminus\nu\Sigma_J)$ corresponding to the positive push-off of $z$ in the basis $\beta^*$ (see \cite{bz}, Lemma 8.6). Since $J$ is fibered, $A_J$ is invertible over $\mathbb{Z}$ (see \cite{bz}, Proposition 8.6). It follows that $A_J^{-1}[x]_{\beta^*}$ is the homology class of $x$ projected onto $\Sigma_J^+=\Sigma_J \times \{1\}$ in the basis $\beta$.

Now consider the induced map of the inclusion $\mathbb{S}^3\smallsetminus\nu(\Sigma_J \sqcup y) \to \mathbb{S}^3\smallsetminus\nu(J \sqcup y)$ in homology. For any knot $z$ in $\mathbb{S}^3\smallsetminus\nu(\Sigma_J \sqcup y)$, $\text{lk}(J,z)=0$. This implies $[z] \to \text{lk} (y,z) [\mu]$. Hence:
\[
[x] \to \left(r_0+\sum_{i=1}^{2g} r_i \cdot \text{lk}(y,a_i) \right)\cdot [\mu]
\]
Furthermore, our prior observations imply that:
\[
\left[ \begin{array}{c} r_1 \\ \vdots \\ r_{2g} \end{array} \right]_{\beta}= A_J^{-1} \left[ \begin{array}{c} \text{lk}(a_1,x) \\ \vdots \\ \text{lk}(a_{2g},x) \end{array} \right]_{\beta^*}
\]
Lastly, note that by definition of $\text{lk}_{\Sigma_J}$, $r_0=\text{lk}_{\Sigma_J}(y,x)$.
\end{proof}

We return now to the proof of Theorem \ref{thm_main_alex}. Let $J,K$ be as in the statement of Theorem \ref{thm_main_alex}. Suppose that $\Sigma_J$ is a fiber of $J$ and that $\Sigma_K$ is a Seifert surface of $K$ such that $\Sigma_J \cap \Sigma_K=\emptyset$. Then $K$ bounds a Seifert surface in $\mathbb{S}^3\smallsetminus\nu\Sigma_J$, which is identified via a diffeomorphism as above with $\Sigma_J \times [0,1]$. Thus the link $L=J \sqcup K$ has a virtual cover such that the associated virtual knot $\upsilon$ is almost classical. Lemma \ref{lemma_linking} now allows for the computation of the Seifert matrices $V^{\pm}$ for $\upsilon$ in terms of the block decomposition of the Seifert matrix $A$ of $L$ from Equation \ref{eqn_seif_mat}.

\begin{corollary} For a basis of simple closed curves $a_{2g_1+1},\ldots,a_{2(g_1+g_2)}$ of $\Sigma_K$, let  $V^{\pm}=(\text{lk}_{\Sigma_{J}}(a_i^{\pm},a_j))$ be the Seifert matrices of $\upsilon$. Then in the notation of Equation \ref{eqn_seif_mat}, we have that:
\[
V^-=A_K-B^{\uptau} A_J^{-1}B\ \text{and}\ V^+=A_K^{\uptau}-B^{\uptau}A_J^{-1}B.
\]
\end{corollary}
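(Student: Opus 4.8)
The plan is to apply Lemma \ref{lemma_linking} entrywise to $V^{\pm}$, taking $y = a_i^{\pm}$ and $x = a_j$ where $a_i, a_j$ range over the curves $a_{2g_1+1}, \ldots, a_{2(g_1+g_2)}$ on $\Sigma_K$. Since $\Sigma_K \subset \mathbb{S}^3 \smallsetminus \nu\Sigma_J \cong \Sigma_J \times [0,1]$ and the push-offs $a_i^{\pm}$ are taken in the normal direction of $\Sigma_K$, both $y$ and $x$ sit in $\mathbb{S}^3 \smallsetminus \nu\Sigma_J$ in the position demanded by the lemma, which then gives
\[
\text{lk}_{\Sigma_J}(a_i^{\pm}, a_j) = \text{lk}(a_i^{\pm}, a_j) - l_{a_i^{\pm}}^{\uptau} A_J^{-1} l_{a_j},
\]
with $l_z^{\uptau} = (\text{lk}(a_1, z), \ldots, \text{lk}(a_{2g_1}, z))$ assembled from the fiber basis on $\Sigma_J$. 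The remaining task is to identify the two terms on the right with the blocks of the Seifert matrix $A$ of Equation \ref{eqn_seif_mat}. Note that $A_J^{-1}$ exists precisely because $J$ is fibered, as already used in the proof of Lemma \ref{lemma_linking}.

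For the first term I would invoke the standard transpose identity for Seifert forms. Since the block $A_K$ is built from the $-$ push-off, $\text{lk}(a_i^-, a_j) = (A_K)_{ij}$ by definition, while $\text{lk}(a_i^+, a_j) = \text{lk}(a_j^-, a_i) = (A_K)_{ji} = (A_K^{\uptau})_{ij}$. Thus the first term contributes $A_K$ to $V^-$ and $A_K^{\uptau}$ to $V^+$; this is the sole source of the difference between the two Seifert matrices.

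For the correction term the essential input is $\Sigma_J \cap \Sigma_K = \emptyset$. Because every fiber curve $a_m$ (with $1 \le m \le 2g_1$) is disjoint from $\Sigma_K$, the linking number $\text{lk}(a_m, a_i^{\pm})$ is unaffected by the normal push-off of $a_i$ off $\Sigma_K$; symmetrically $\text{lk}(a_m^-, a_j) = \text{lk}(a_m, a_j)$. Combined with the symmetry of $\text{lk}$ in $\mathbb{S}^3$, this shows that $l_{a_i^{\pm}}$ equals the column of $B$ indexed by $a_i$ independently of the sign, since $B_{m,i} = \text{lk}(a_m^-, a_i) = \text{lk}(a_m, a_i)$, and likewise identifies $l_{a_j}$ with the column of $B$ indexed by $a_j$. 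Writing these as $Be_i$ and $Be_j$ gives $l_{a_i^{\pm}}^{\uptau} A_J^{-1} l_{a_j} = (B^{\uptau} A_J^{-1} B)_{ij}$, and assembling over all $i, j$ yields $V^- = A_K - B^{\uptau} A_J^{-1} B$ and $V^+ = A_K^{\uptau} - B^{\uptau} A_J^{-1} B$.

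I expect the main obstacle to be bookkeeping with orientation and push-off conventions rather than anything deep: one must verify that the $\pm$ push-offs defining $V^{\pm}$ (relative to $\Sigma_K$ inside $\Sigma_J \times [0,1]$) coincide with the push-offs used to build $A$ in $\mathbb{S}^3$, and that the transpose in $\text{lk}(a_i^+, a_j) = (A_K^{\uptau})_{ij}$ is correctly oriented. The disjointness of the surfaces is exactly what makes the two push-off directions interchangeable in the cross-terms, collapsing the a priori distinct vectors $l_{a_i^{+}}$ and $l_{a_i^{-}}$ onto a single column of $B$; checking this collapse carefully is the crux of the argument.
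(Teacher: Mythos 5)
Your proposal is correct and follows essentially the same route as the paper, whose entire proof of this corollary is the remark that it ``follows from Lemma \ref{lemma_linking} and some elementary matrix algebra'': your entrywise application of the lemma with $y=a_i^{\pm}$, $x=a_j$, the transpose identity $\text{lk}(a_i^{+},a_j)=\text{lk}(a_j^{-},a_i)$ for the $A_K$ versus $A_K^{\uptau}$ blocks, and the use of $\Sigma_J\cap\Sigma_K=\emptyset$ to collapse $l_{a_i^{+}}$ and $l_{a_i^{-}}$ onto the single column $Be_i$ are exactly the elided bookkeeping. No gaps.
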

\begin{proof} This follows from Lemma \ref{lemma_linking} and some elementary matrix algebra.  
\end{proof}

\subsection{Proof of Theorem \ref{thm_main_alex}} We compute $\det(AT-A^{\uptau})$ as follows:
\[
A=\left[\begin{array}{c|c} A_J & B \\ \hline B^{\uptau} & A_K\end{array}\right] \implies \det(AT-A^{\uptau})=\left|\begin{array}{c|c} t_1 A_J-A_J^{\uptau} & (t_2-1) B \\ \hline (t_1-1) B^{\uptau} & t_2 A_K-A_K^{\uptau} \end{array}\right|.
\] 
Setting $t_2=1$ immediately gives the second claim and setting $t_1=1$ gives the third claim. Now, from the proceeding lemmas we have that:
$$s V^--V^+=(s A_K-A_K^{\uptau})-(s-1)B^{\uptau} A_J^{-1}B.$$
Next, set $t_1=0$ in $AT-A^{\uptau}$. To get a more recognizable form, multiply on the left by a matrix of determinant 1.
\begin{eqnarray*}
& & \left[\begin{array}{c|c} I & O \\ \hline -B^{\uptau}(A_J^{\uptau})^{-1} & I \end{array}\right] \cdot \left[\begin{array}{c|c} -A_J^{\uptau} & (t_2-1) B \\ \hline -B^{\uptau} & t_2 A_K-A_K^{\uptau} \end{array}\right] \\
&=& \left[\begin{array}{c|c} -A_J^{\uptau} & (t_2-1) B \\ \hline 0 & t_2 A_K-A_K^{\uptau}-(t_2-1)B^{\uptau}(A_J^{\uptau})^{-1} B \end{array}\right]
\end{eqnarray*}

Here $I$ denotes an identity matrix and $O$ denotes a matrix of all zeros. Since $J$ is fibered, $\det(-A_J^{\uptau})=(-1)^{2g}\det(A_J)=\pm 1$ (see \cite{bz}, Proposition 8.16). Consider now the $(2,2)$ entry on the right above. Taking the transpose and noting that $(A_J^{\uptau})^{-1}=(A_J^{-1})^{\uptau}$, we have that:
\[
((t_2 A_K-A_K^{\uptau})-(t_2-1)B^{\uptau}(A_J^{\uptau})^{-1} B)^{\uptau}=t_2 A_K^{\uptau}-A_K-(t_2-1)B^{\uptau}A_J^{-1} B.
\]
Now substitute $t_2=s^{-1}$, take the determinant, and multiply by $s^{2 g_K}=(-s)^{2g_K}$:
\begin{eqnarray*}
& & s^{2g_K} \det\left(s^{-1} A_K^{\uptau}-A_K-(s^{-1}-1)B^{\uptau}A_J^{-1} B\right) \\
&=& \det\left(-s \cdot (s^{-1} A_K^{\uptau}-A_K-(s^{-1}-1)B^{\uptau}A_J^{-1} B) \right) \\
                & = & \det\left(s A_K-A_K^{\uptau}-(s-1)B^{\uptau}A_J^{-1} B \right)\\
                & = & \det\left(s V^--V^+\right).
\end{eqnarray*}
Combining all these facts, we obtain $\overline{\Delta}_{\Sigma_K}(s)=\det(A_J) s^{2g_K} \nabla_{\Sigma_J,\Sigma_K}(0,s^{-1})$. This completes the proof of Theorem \ref{thm_main_alex}. \hfill $\square$

\begin{remark} To be consistent with the calculations of $\overline{\Delta}_{\upsilon}(t)$ in \cite{boden2015virtual} (see Example 7.7, page 28), one must use their convention for determining the the $\pm$ push-offs. The convention of \cite{boden2015virtual} is to use the \emph{left-hand rule}: the positive push-off is found by grabbing the knot with the \emph{\underline{\textbf{left hand}}} so that the thumb points in the direction of the knot's orientation and the fingers push through the Seifert surface toward $\Sigma_K^+$. Here, this convention must be used when computing the $\pm$ push-offs for both $\Sigma_K$ and the fiber $\Sigma_J$. However, if any consistent convention is used for both $\overline{\Delta}_{\Sigma_K}(t)$ and $\nabla_{\Sigma_J,\Sigma_K}(t_1,t_2)$ then the theorem will hold.
\end{remark}

\begin{figure}
\fcolorbox{black}{white}{
$\begin{array}{cc}
\begin{array}{c}
\def\svgwidth{1.75in}
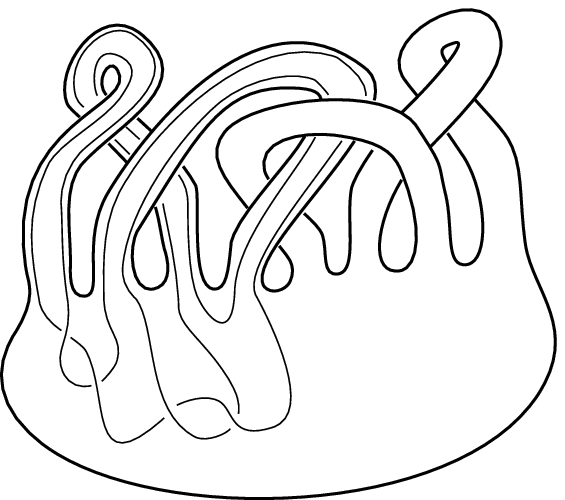 \end{array} &  \begin{array}{c}
\def\svgwidth{2.55in}
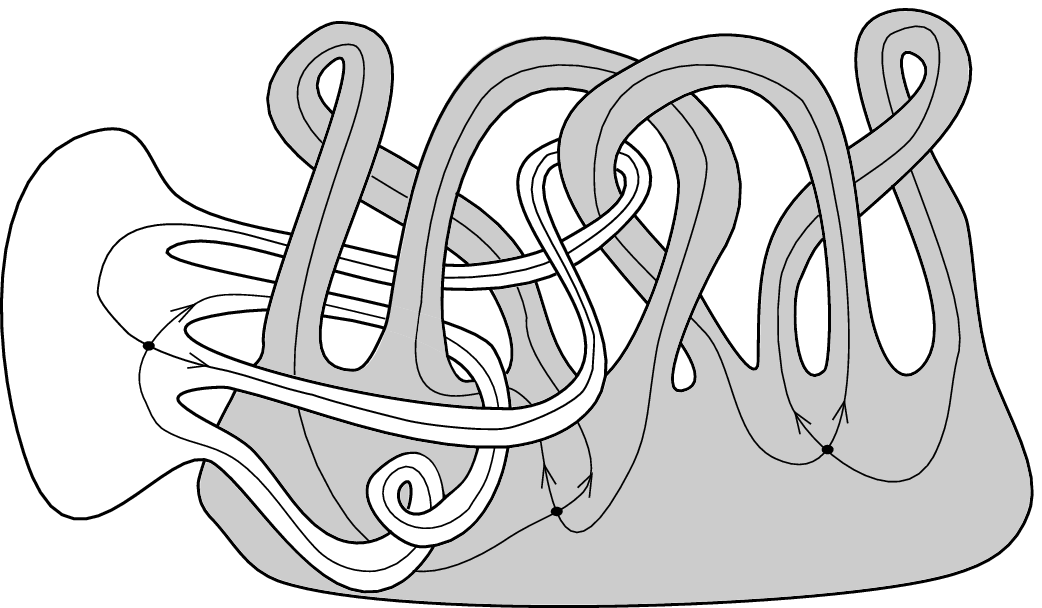\end{array}  \\ 
\end{array}
$}
\caption{(Left) A two component boundary link $L=J \sqcup K$ with $J \leftrightharpoons 8_{21}$. (Right) The basis used to compute the multi-variable Alexander polynomial.}\label{fig_alex_ex}
\end{figure}

\begin{example} Let $L=J\sqcup K$ be the two component link on the left in Figure \ref{fig_alex_ex}. Here, $J$ is the fibered knot $8_{21}$. A minimal genus Seifert surface $\Sigma_J$ of $J$ (i.e. a fiber) is depicted in disc-band form. On the right in Figure \ref{fig_alex_ex}, we have $\Sigma_J \sqcup \Sigma_K$, where $\Sigma_K$ is a Seifert surface for $K$. Curves representing symplectic bases for $H_1(\Sigma_J;\mathbb{Z})$ and $H_1(\Sigma_K;\mathbb{Z})$ are given. From this, one can compute the MVAP as previously defined. The associated virtual knot $\upsilon \leftrightharpoons 4.105$ \cite{vtable} of $L$ can be seen in Figure \ref{fig_alex_gauss}. This agrees with the computation of $\overline{\Delta}_{\upsilon}(t)$ in \cite{boden2015virtual}, Table 2.
\end{example}

\begin{eqnarray*}
\nabla_{\Sigma_J,\Sigma_K}(t_1,t_2)&=& -2 + 8 t_1 - 10 t_1^2 + 6 t_1^3 - t_1^4 + 2 t_2 - 10 t_1 t_2 + 15 t_1^2 t_2\\ 
             &-& 10 t_1^3 t_2 + 2 t_1^4 t_2 - t_2^2 + 6 t_1 t_2^2 - 10 t_1^2 t_2^2 +8 t_1^3 t_2^2 - 2 t_1^4 t_2^2.
\end{eqnarray*}
\begin{eqnarray*}
\det(A_J) \cdot t^2 \cdot \nabla_{J,K}(0,t^{-1})&=& (-1) t^2 \left(-2-\frac{1}{t^2}+\frac{2}{t} \right)\\
                  &=& 1-2t+2t^2 \\
                  &\doteq& \overline{\Delta}_{\upsilon}(t)
\end{eqnarray*}

\begin{figure}
\fcolorbox{black}{white}{
$\begin{array}{ccc} \\
\begin{array}{c}
\def\svgwidth{2in}
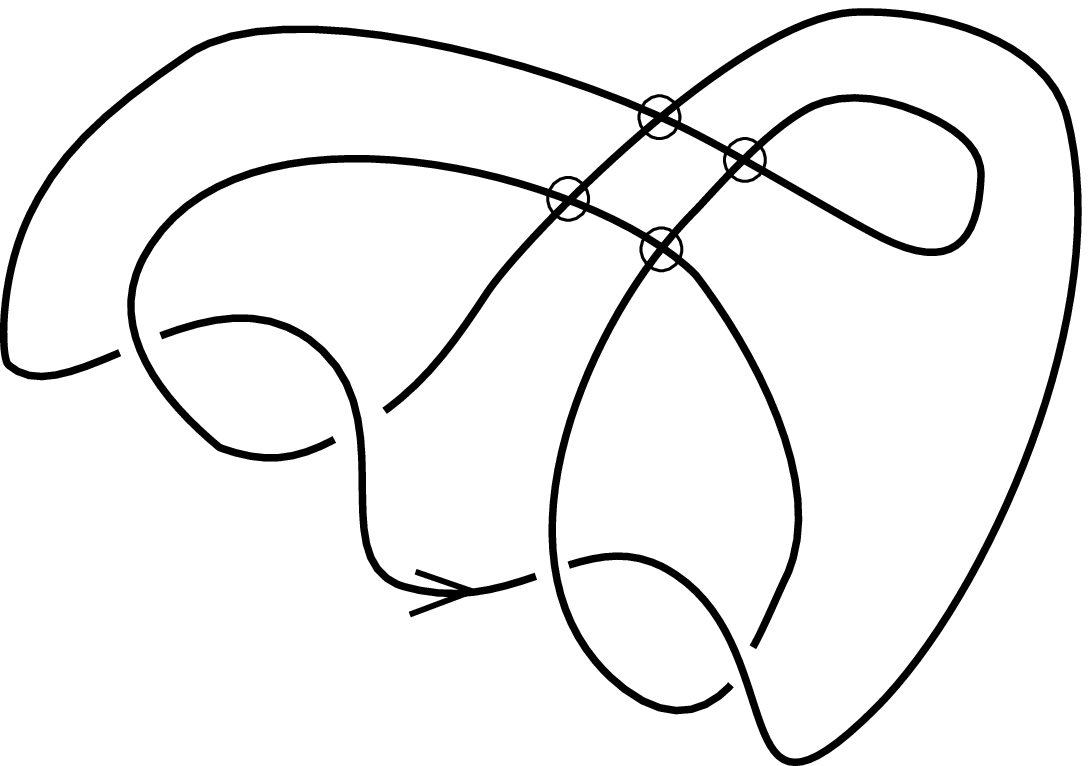 \end{array} &  & \begin{array}{c}
\def\svgwidth{1.8in}
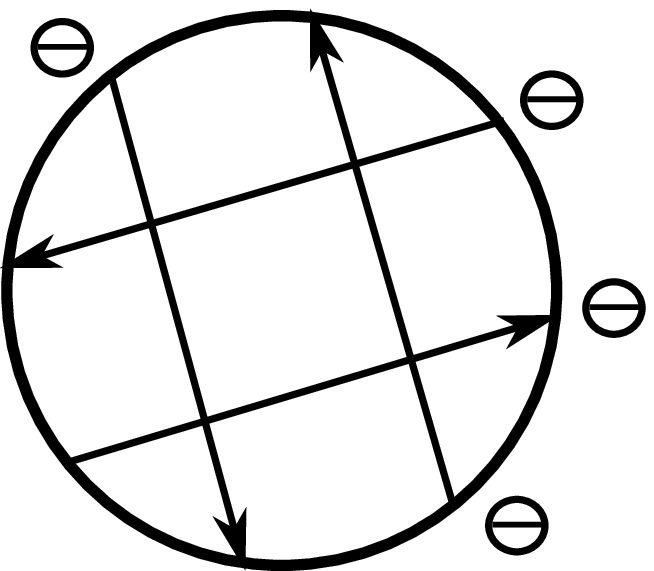\end{array}  \\ \\
\end{array}
$}
\caption{(Left) A virtual knot diagram of 4.105. (Right) A Gauss diagram of 4.105.}\label{fig_alex_gauss}
\end{figure}

\section{Relating the index and the triple linking number} \label{sec_index}

\subsection{Theorem statement} As mentioned in Section 1.3, almost classical knots are homologically trivial in some thickened surface. The two equivalent combinatorial definitions of AC are: (1) it has a diagram which is Alexander numerable, or (2) it has a diagram in which every classical crossing has index zero. The index zero definition of AC stands out as it is a collection of local conditions as opposed to a single global one. Here we give an interpretation of the index of a crossing in terms of the Milnor triple linking number. Hence we obtain relations with classical link invariants in the case that the associated virtual knot is not AC. In this section, we will prove the following theorem.

  \begin{theorem}
  \label{CochranTripleLinking}
  Let $L=J \sqcup K$ be an SSF link, where $J$ is a connected sum of trefoils, $\Sigma_J$ the fiber, and $\upsilon$ the associated virtual knot. Let $x$ denote a classical crossing of $\upsilon$ and also the corresponding crossing on $\Sigma_J$ (see Section \ref{sec_vc}). Let $L_{x}=J \sqcup K_{2} \sqcup K_{3}$ be the link in $\mathbb{S}^3$ obtained by performing the oriented smoothing at $x$. Then:
  $$ \bar{\mu}_{123}(L_x)=-\text{Index}(x)\mod{(lk(K_{2},K_{3}))}.$$
  \end{theorem}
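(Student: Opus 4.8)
The plan is to combine the description of the index as an intersection number on the fiber $\Sigma_J$ with Cochran's Seifert-surface formula for $\bar{\mu}_{123}$, taking $\Sigma_J$ itself as the Seifert surface of the first component $J$. First I would settle the pairwise linking numbers of $L_x$, since these control both the statement's modulus and the applicability of Cochran's formula. The two smoothing curves $K_2,K_3$ are, up to isotopy in $\mathbb{S}^3\smallsetminus\nu J$, simple closed curves on the fiber $\Sigma_J$, and any closed curve $c$ on a Seifert surface of $J$ satisfies $\text{lk}(J,c)=0$: pushing $c$ off $\Sigma_J$ along the normal direction makes it disjoint from $\Sigma_J$, and $\text{lk}(J,c)$ is the intersection number of this push-off with $\Sigma_J$. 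Hence $\text{lk}(J,K_2)=\text{lk}(J,K_3)=0$ (consistent with $\text{lk}(J,K_2)+\text{lk}(J,K_3)=\text{lk}(J,K)=0$), so $\bar{\mu}_{123}(L_x)$ is well defined modulo $\gcd(0,0,\text{lk}(K_2,K_3))=\text{lk}(K_2,K_3)$, matching the right-hand side. I would also record, from the definition recalled in Section~\ref{sec_background}, that $\text{Index}(x)=\pm(K_2\cdot K_3)_{\Sigma_J}$, the algebraic intersection number of the two smoothing curves on $\Sigma_J$, and fix orientation conventions so that the sign works out.

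Next I would apply Cochran's identity $\bar{\mu}_{123}(L_x)=-\text{lk}(\Sigma_J\cap\Sigma_{K_2},K_3)$, using the fiber $\Sigma_J$ as the Seifert surface of $J$ (legitimate since $\text{lk}(J,K_2)=\text{lk}(J,K_3)=0$) and a Seifert surface $\Sigma_{K_2}$ for $K_2$ chosen disjoint from $J$. Because $\text{lk}(K_2,K_3)$ need not vanish, $\Sigma_{K_2}$ cannot be made disjoint from $K_3$, so I would phrase the derivative computation as a congruence modulo $\text{lk}(K_2,K_3)$ from the start, noting that changing $\Sigma_{K_2}$ alters $\text{lk}(\Sigma_J\cap\Sigma_{K_2},K_3)$ only by multiples of $\text{lk}(K_2,K_3)$. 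The decisive geometric input is the product structure $\mathbb{S}^3\smallsetminus\nu\Sigma_J\cong\Sigma_J\times[0,1]$ coming from fiberedness: I would build $\Sigma_{K_2}$ adapted to this product so that the derivative curve $D=\Sigma_J\cap\Sigma_{K_2}$ is nonempty exactly when $K_2$ is homologically essential on $\Sigma_J$ and represents the class $[K_2]_{\Sigma_J}\in H_1(\Sigma_J)$. Then $\text{lk}(D,K_3)$ becomes a linking number of curves supported near the fiber, computable from the homology of $K_2$ and $K_3$ on $\Sigma_J$.

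I would then translate this $\mathbb{S}^3$ linking number into surface data using Lemma~\ref{lemma_linking}, writing $\text{lk}(D,K_3)$ as a surface linking number $\text{lk}_{\Sigma_J}(D,K_3)$ plus a correction of the form $l^{\uptau}A_J^{-1}l'$, and then invoking the Cimasoni--Turaev asymmetry $\text{lk}_{\Sigma_J}(K_2,K_3)-\text{lk}_{\Sigma_J}(K_3,K_2)=(K_2\cdot K_3)_{\Sigma_J}$. The point is that the antisymmetric part of the surface linking pairing is exactly the intersection form on $\Sigma_J$, which is the index, while the symmetric part and the $A_J^{-1}$-correction must be absorbed into the modulus. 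Here the hypothesis that $J$ is a connected sum of trefoils enters concretely: it provides an explicit fiber $\Sigma_J$ of the correct genus carrying the SSF diagram of $\upsilon$, together with a unimodular, block-diagonal Seifert matrix $A_J$, with which one can verify that the correction term is an integer multiple of $\text{lk}(K_2,K_3)$ and hence vanishes modulo $\text{lk}(K_2,K_3)$.

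The main obstacle is this last geometric-homological step: constructing $\Sigma_{K_2}$ so that its intersection $D$ with the fiber genuinely represents $[K_2]_{\Sigma_J}$, and proving that every contribution to $\text{lk}(D,K_3)$ other than $(K_2\cdot K_3)_{\Sigma_J}$ is a multiple of $\text{lk}(K_2,K_3)$. A secondary but real difficulty is sign bookkeeping: I would need to track orientations consistently through the oriented smoothing, the push-off conventions of Lemma~\ref{lemma_linking}, and Cochran's orientation of $D$, in order to land on $-\text{Index}(x)$ rather than $+\text{Index}(x)$.
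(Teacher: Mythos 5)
Your setup (pairwise linking numbers, the identification $\text{Index}(x)=[k_2]\cdot[k_3]$ on $\Sigma_J$, and the modulus $\delta=\text{lk}(K_2,K_3)$) matches the paper, but the two load-bearing steps both have genuine gaps. First, the ``congruence version'' of Cochran's identity that you invoke is not a known input: Theorem \ref{thm_tim} is stated, and is only valid as stated, when \emph{all three} pairwise linking numbers vanish, and when $\text{lk}(K_2,K_3)\neq 0$ the assertion $\bar{\mu}_{123}(L_x)\equiv -\text{lk}(\Sigma_J\cap\Sigma_{K_2},K_3)\pmod{\text{lk}(K_2,K_3)}$ is precisely the nontrivial statement that must be proved, not a rephrasing. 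Your observation that changing $\Sigma_{K_2}$ moves the right-hand side by multiples of $\text{lk}(K_2,K_3)$ only shows that side is well defined modulo the indeterminacy (in fact, in the fibered setting it is determined outright, since $[\Sigma_J\cap\Sigma_{K_2}]\in H_1(\Sigma_J)$ is fixed by homology); it says nothing about its equality with $\bar{\mu}_{123}$. The paper fills exactly this hole by switching to the Mellor--Melvin formula $\bar{\mu}_{123}\equiv m_{123}(\Sigma)-t_{123}(\Sigma)\pmod{\delta}$ (Theorem \ref{thm_melmel}), which tolerates $\text{lk}(K_2,K_3)\neq 0$, and then proving $m_{123}(\Sigma)=0$ (Lemma \ref{lemma_milnor_next}) via the SSF structure: $w_1$ is empty and $w_2,w_3$ are words in a single letter, so all the relevant Magnus coefficients vanish. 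Some argument of this kind is indispensable in your outline.

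The second gap is an outright error: the derivative curve $D=\Sigma_J\cap\Sigma_{K_2}$ does \emph{not} represent the diagram class $[k_2]\in H_1(\Sigma_J)$. By Lemma \ref{lemma_B_deriv}, $[D]=\mathcal{F}^{-1}[K_2]=F^{-1}A_J[k_2]$ where $F=A_J-A_J^{\uptau}$, and for a sum of trefoils $F^{-1}A_J$ is block-diagonal with blocks $\left[\begin{smallmatrix}0 & 1\\ -1 & 1\end{smallmatrix}\right]$, not the identity. Were $[D]=[k_2]$ true, your argument would prove the theorem for \emph{every} fibered $J$ with no use of the trefoil hypothesis, contradicting the counterexamples acknowledged in Section \ref{sec_future}, item (1). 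The actual role of the trefoil hypothesis is the identity $A_J^{\uptau}FA_J=F$ from Section \ref{sec_on_tref} --- equivalently, that the twisting $F^{-1}A_J$ carrying diagram classes to derivative classes preserves the intersection form --- whence the paper gets $t_{123}=[c(J,K_2)]\cdot[c(J,K_3)]=(F^{-1}A_J[k_2])^{\uptau}F(F^{-1}A_J[k_3])=[k_2]^{\uptau}F[k_3]=\text{Index}(x)$, an \emph{exact} integer equality. Consequently there is no ``symmetric part'' of the surface linking pairing and no $A_J^{-1}$ correction left over to absorb into the modulus, and your unsupported claim that such terms are multiples of $\text{lk}(K_2,K_3)$ is not the operative mechanism; the Cimasoni--Turaev asymmetry and Lemma \ref{lemma_linking} can be bypassed entirely once the derivative classes are handled correctly. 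So you place the trefoil hypothesis in the argument, but at the wrong step and for the wrong reason; repairing the proof means replacing your third paragraph with the homological computation through Lemma \ref{lemma_B_deriv} and $A_J^{\uptau}FA_J=F$.
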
 
    
 We will break the proof down into several parts. First we will show that any virtual knot  can be represented as a diagram on a surface that is connected sum of trefoil fibers. Thus, classical crossing of the associated virtual knot correspond to crossings on the surface (see Section \ref{sec_vc}). Next we will review the index of a crossing (see Section \ref{sec_index_cross}) and two methods for computing $\bar{\mu}_{123}$: Cochran's derivative of links (see Section \ref{sec_deriv}) and the method of Mellor-Melvin (see Section \ref{sec_melmel}). The proof of Theorem \ref{CochranTripleLinking} is in Section \ref{sec_trip_proof}, along with an example. 

\subsection{Virtual knots on sums of trefoils} \label{sec_on_tref} Every virtual knot diagram can be represented on a connected sum of trefoil or figure-eight fibers. Here we prove this fact and discuss how it is related to the proof of Theorem \ref{CochranTripleLinking}.  

\begin{lemma} \label{lemma_trefoil}
   Every virtual knot diagram $\upsilon$ can be associated to some two component SSF link $L=J\sqcup K$, where $J$ is a connected sum of trefoils or a connected sum of figure-eights, and $\Sigma_J$ is a boundary connect sum of fibers of such knots.
   \end{lemma}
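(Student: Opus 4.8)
The plan is to run the special Seifert form construction of Section~\ref{sec_vc} in reverse: instead of starting from a fibered $J$ and producing $K$, I would start from a surface diagram for $\upsilon$ and then \emph{recognize} its underlying surface as the fiber of a suitable connected sum. Concretely, I would first use the equivalence of the four models in Section~\ref{sec_background} to represent $\upsilon$ by a knot diagram $D$ on a closed connected oriented surface $\Sigma$; after stabilizing if necessary (attaching a handle disjoint from $D$) I may assume the genus $g$ of $\Sigma$ satisfies $g \geq 1$. Deleting an open disc disjoint from $D$ then produces a compact oriented surface $\Sigma'$ of genus $g$ with a single boundary circle, still carrying $D$.

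The crux is to identify $\Sigma'$ with a fiber of $J$. Here I would use the fact that the fiber of a single trefoil, and equally of a figure-eight, is a genus-one surface with one boundary component, together with the fact that cutting a connected sum along its natural Seifert surface exhibits the fiber as the boundary connected sum of the fibers of the summands. Hence the fiber of a connected sum of $g$ trefoils (resp.\ $g$ figure-eights) is the boundary connected sum of $g$ once-punctured tori. By the classification of compact oriented surfaces, $\Sigma'$---being of genus $g$ with one boundary component---is diffeomorphic to precisely this boundary connected sum, which simultaneously realizes $\Sigma'$ as a fiber $\Sigma_J$ of a connected sum of $g$ trefoils and as a fiber of a connected sum of $g$ figure-eights. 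This accounts for both cases in the statement.

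With $\Sigma_J \subset \mathbb{S}^3$ so obtained, I would transport $D$ onto it and build $K$ exactly by the SSF recipe: at each crossing of $D$ I insert a crossing-in-a-ball as in Figure~\ref{fig_cross}, with over/under data dictated by $D$, and I join the free arc-ends by pairwise disjoint arcs in the complement of the balls following the underlying immersed curve of $D$. By construction $K$ is in special Seifert form, so $L=J \sqcup K$ is an SSF link; and by the definition of the associated virtual knot the diagram of $K$ read off from $\Sigma_J$ is $D$, whence the associated virtual knot of $L$ is $\upsilon$.

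I expect the main obstacle to lie not in any single estimate but in the bookkeeping of this last step: one must verify that the purely combinatorial surface diagram $D$ is faithfully realized by an honest link in $\mathbb{S}^3$---that the crossings-in-balls and connecting arcs can be chosen disjointly and with the prescribed crossing signs, and that the resulting diagram of $K$ on $\Sigma_J$ agrees with $D$ up to the diffeomorphism and (de)stabilization moves that define virtual equivalence. Since the SSF construction is designed precisely so that the surface diagram can be recovered from $K$, this amounts to confirming that the recipe surjects onto surface diagrams on $\Sigma_J$, which the construction should make transparent.
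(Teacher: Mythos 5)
Your proposal is correct and follows essentially the same route as the paper: represent $\upsilon$ on its Carter surface, delete a disc disjoint from the diagram, identify the resulting once-punctured genus-$g$ surface with a boundary connected sum of trefoil (or figure-eight) fibers by the classification of surfaces, and transport the diagram to obtain $K$ in SSF. The only cosmetic difference is your treatment of genus zero (stabilizing first rather than embedding $D$ in a small disc on a single trefoil fiber, as the paper does), and your final paragraph spells out the SSF realization step that the paper leaves implicit.
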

   \begin{proof} Every virtual knot diagram can be represented as a diagram $D$ on its Carter surface $\Sigma$. Let $\Sigma'=\Sigma \smallsetminus \nu(z_0)$ where $z_0 \in \Sigma$ is disjoint from $D$. Let $T$ be a fiber of a trefoil or figure-eight knot. Then $T$ has genus $1$. If the genus $g$ of $\Sigma$ is $0$, then we may embed $D$ as a small disc on $T$. If $g\ge 1$, then $\Sigma'$ is diffeomorphic to a boundary connected sum of copies of $T$ (see Figure \ref{fig_trefoils}). Taking the image of $D$ under this diffeomorphism implies the result.
   \end{proof}

\begin{figure}[htb]
\fcolorbox{black}{white}{
$\begin{array}{c} \\
\begin{array}{c}
\def\svgwidth{4.5in}
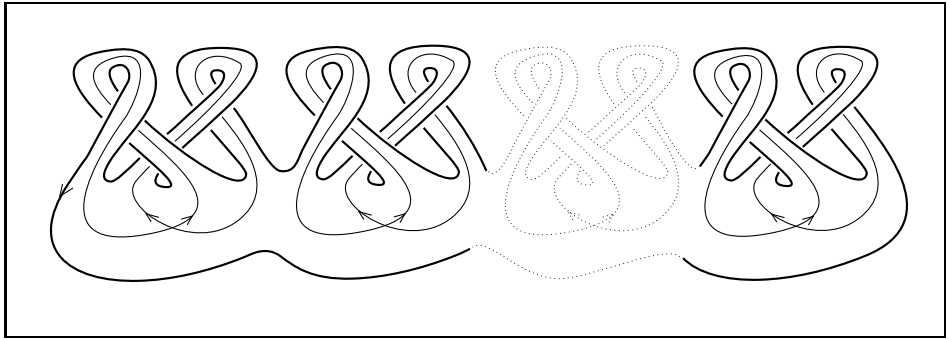 \end{array}  \\ \\
\end{array}
$}
\caption{A boundary connected sum of right handed trefoil fibers.}\label{fig_trefoils}
\end{figure}

The reason to represent a virtual knot $\upsilon$ as in Lemma \ref{lemma_trefoil} is that we need to choose the Seifert matrix of $J$ so that it behaves nicely with respect to the matrix of the intersection form of $\Sigma_J$. Indeed, suppose that $\Sigma_{J}$ is a boundary connect sum of trefoil fibers as in Figure \ref{fig_trefoils}. Then with respect to a suitable basis, the Seifert matrix $A_J$ for $\Sigma_J$ is given by: 
\[
A_J=\text{diag}(H,\ldots,H), \,\, \text{where } H=\left[\begin{array}{cc} -1 & 1 \\ 0 & -1 \end{array}\right].
\]

Now recall that $A_J-A_J^{\uptau}=F$, where $F$ is a matrix for the intersection form on $\Sigma_J$. Then $F$ is also a block diagonal matrix such that all blocks are given by $\left[\begin{array}{cc} 0 & 1 \\ -1 & 0 \end{array}\right]$.  On the other hand, we may consider $F$ as the matrix for  the linear transformation $\mathcal{F}:H_{1}(\Sigma_{J})\rightarrow H_{1}(\mathbb{S}^{3}\smallsetminus\nu\Sigma_{J})$ with respect to this same basis and its dual. Since the Alexander polynomial evaluated at 1 gives $\pm1$, $F$ is an invertible matrix over $\mathbb{Z}$ and thus $\mathcal{F}$ is an isomorphism. Furthermore, it is straightforward to verify a special relationship between the Seifert matrix and the matrix of the intersection form $F$ for a connected sum of trefoils: $A_J^{\uptau}FA_J=F$. Also note that $F^{-1}=-F=F^{\uptau}$.  The same relation holds for a connected sum of figure-eight knots. Henceforth, we will only consider the case of connected sums of trefoil fibers in order to avoid needless notations and special case arguments.

\subsection{The index of a crossing} \label{sec_index_cross}
The index has been used by many authors as an ingredient in virtual knot invariants (see e.g. \cite{henrich2010sequence}).
Let $x$ be a classical crossing of an oriented virtual knot diagram $\upsilon$. Smooth this crossing in the orientation preserving fashion to yield a two component virtual link. Then flatten all classical crossings to obtain two flat virtual knot diagrams $\upsilon_{1}$ and $\upsilon_{2}$. Let $\upsilon_{1}$ be the component to the left of the smoothed crossing. Each double point (flattened classical crossing) between the two components can be given a sign $\varepsilon_i$, as in Figure \ref{IndexRule}. The index of the crossing $x$ is defined to be:
\[
\emph{Index}(x)=\sum_i \varepsilon_i.
\]
\begin{figure}
\fcolorbox{black}
{white}{
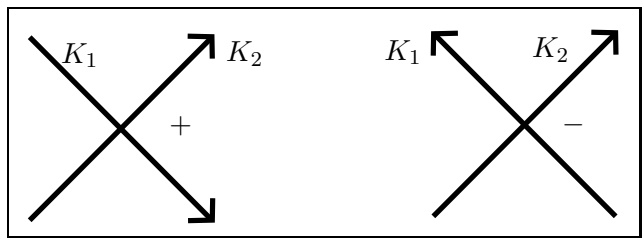
}
\caption{A right hand rule for computing the sign of flattened crossing.}
\label{IndexRule}
\end{figure}

In the context of virtual covers, the index can be computed via the intersection form as follows. Let $L=J \sqcup K$ be an SSF link with associated virtual knot $\upsilon$. Let $x$ be one of the classical crossings of $\upsilon$, that is, $x$ is a crossing in a ball (see Figure \ref{fig_cross}) of $K$ in SSF on $\Sigma_{J}$. Construct the three component link $J \sqcup K_{1} \sqcup K_{2}$ by smoothing at $x$ in the orientation preserving fashion and ordering the components so that $K_{1}$ is the left half. Notice that each of $K_{1}$ and $K_{2}$ are in SSF with respect to $J$. Let $k_{1}$ and $k_{2}$ be the corresponding diagrams of $K_{1}$ and $K_{2}$ on $\Sigma_{J}$. As noted in \cite{chrisman2014three}, $\emph{Index}(x)$ can be computed via the intersection form on $\Sigma_{J}$ as $\emph{Index}(x)=[k_{1}]\cdot [k_{2}]$ where $[k_{i}] \in H_{1}(\Sigma_{J};\mathbb{Z})$ (e.g see \cite{rolfsen1976knots}, page 202). Note that changing the order changes the sign of $\emph{Index}(x)$ and hence our definition here sometimes differs from other authors by a sign.

\subsection{Milnor's Triple linking numbers} \label{sec_milnor}
Milnor in \cite{milnor1957isotopy} extended the concept of linking numbers to links of more than two components. In general, these higher order linking numbers are defined when all lower order linking numbers are null. For example, the triple linking number of the Borromean rings is $\pm1$. In the case that the lower order linking numbers are non-zero then those of higher order are defined up to an indeterminacy.
    
A casual description of Milnor's higher linking numbers (as described in \cite{mellor2003geometric}) goes like this: let $L$ be an $k$-component link with components $L_{i}$, $\pi$ its fundamental group, and $\pi^{n}$ the $n^{th}$ term in its lower central series. Let $l_{i}$ and $m_{i}$ be the longitude and meridian respectively of component $L_{i}$. In this case, $l_{i}$ can be written as a word $l_{i}^{n}$ in the preferred meridians $m_{1}, \ldots, m_{k}$, taken modulo $\pi^{n}$. Take the Magnus expansion of this word by substituting $(1+h_{i})$ for $m_{i}$ and $(1-h_{i}+h_{i}^{2}\ldots)$ for $m_{i}^{-1}$. For any sequence $i_{1}i_{2}\ldots i_{r}$ of integers, with each term between 1 and $k$, let $\bar{\mu}_{i_{1}i_{2}\ldots i_{r}}$ be the coefficient of $h_{i_{1}}\ldots h_{i_{r-1}}$ in the Magnus expansion of the word $l_{i_{r}}^{n}$. This is defined only up to the greatest common divisor of all the lower order linking numbers. 

Here we will only consider Milnor's triple linking number $\bar{\mu}_{123}$. Fortunately, there are several results that allow one to compute the triple linking number for a three component link using geometric constructions. In particular, Cochran \cite{cochran1985geometric} used his notion of link derivatives to compute Milnor's triple linking number when the pairwise linking numbers are zero. Mellor and Melvin \cite{mellor2003geometric} give a geometric method of computing $\bar{\mu}_{123}$ that can be used in general. Both methods will be used in the proof of Theorem \ref{CochranTripleLinking}.  

\subsection{Method 1: Derivatives of Links} \label{sec_deriv} Let $L=J \sqcup K$ be any two component link such that $\text{lk}(J,K)=0$. Suppose that $\Sigma_{J}$ and $\Sigma_{K}$ are Seifert surfaces for $J,K$, respectively. Since $\text{lk}(J,K)=0$, we may assume that $\Sigma_J\cap K=\emptyset$ and $\Sigma_K \cap J=\emptyset$. If required, this can be done by starting with any pair of Seifert surfaces and ``tubing out'' intersections of $J$ with $\Sigma_K$ and $K$ with $\Sigma_J$ (see \cite{cochran1990derivatives}, Appendix A. I). A \emph{derivative} of $L$ is the curve $c(J,K)=\Sigma_{J} \cap \Sigma_{K}$. This curve may be chosen so that it is a single connected component \cite{cochran1985geometric}. It is oriented so that the triple ($c(J,K)$, positive normal to $\Sigma_{J}$, positive normal to $\Sigma_{K}$) agrees with the standard orientation of $\mathbb{S}^{3}$. The following theorem, due to Cochran, allows us to compute $\bar{\mu}_{123}$ from the linking number of two curves.

\begin{theorem}(see \cite{cochran1990derivatives}, Appendix B.II) \label{thm_tim}
Suppose that $L=K_{1} \sqcup K_{2} \sqcup K_{3}$ is a three component link such that each pairwise linking number is zero. Furthermore, let $\Sigma_{1}$ and $\Sigma_{2}$ be Seifert surfaces for $K_{1}$ and $K_{2}$ such that $\Sigma_i \cap K_{3-i}=\emptyset$ for $i=1,2$. Let $c(K_{1},K_{2})$ be the derivative of $K_{1} \sqcup K_2$. Then: 
\[
\bar{\mu}_{123}(L)=-lk(c(K_{1},K_{2}),K_{3}).
\]
\end{theorem}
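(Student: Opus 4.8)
The plan is to compute both sides of the identity from three Seifert surfaces in general position and then match them. Since all three pairwise linking numbers vanish, I can choose mutually transverse Seifert surfaces $\Sigma_1,\Sigma_2,\Sigma_3$ for $K_1,K_2,K_3$ with $\Sigma_i\cap K_j=\emptyset$ for $i\neq j$, tubing away any unwanted intersections exactly as in \cite{cochran1990derivatives}, Appendix A.I. In particular the derivative $c=\Sigma_1\cap\Sigma_2$ (which may be taken connected) is a union of closed curves disjoint from $K_3$. The right-hand side is then immediately geometric: because $K_3=\partial\Sigma_3$ and $c$ misses $K_3$,
\[
\text{lk}(c,K_3)=c\cdot\Sigma_3=(\Sigma_1\cap\Sigma_2)\cdot\Sigma_3,
\]
the signed count of triple points of the three surfaces. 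So the statement reduces to showing that this triple-point count equals $-\bar{\mu}_{123}(L)$.

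For the left-hand side I would use the Magnus-expansion definition of $\bar{\mu}_{123}$ recalled in Section \ref{sec_milnor}. Since the coefficient of $h_1h_2$ in the expansion of the longitude $l_3$ does not involve $m_3$, I may work in $\pi=\pi_1(\mathbb{S}^3\smallsetminus\nu(K_1\cup K_2))$ and express the zero-framed longitude $l_3$ as a word in $m_1,m_2$ by following $K_3$ once around and recording a letter $m_i^{\epsilon}$ at each transverse intersection of $K_3$ with $\Sigma_i$, where $\epsilon=\pm1$ is the local intersection sign. Because $\text{lk}(K_3,K_i)=0$, the total exponent of each $m_i$ vanishes, so this word lies in $\pi^2$ and is congruent modulo $\pi^3$ to $[m_1,m_2]^{a}$ for a single integer $a$. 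By the definition of the invariant, $\bar{\mu}_{123}=a$, and an elementary expansion of the Magnus series shows $a=\sum\epsilon_k\epsilon_l$, summed over those pairs of recorded letters in which an $m_1$-crossing precedes an $m_2$-crossing along $K_3$.

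The crux is to identify this combinatorial commutator content $a$ with the triple-point count $(\Sigma_1\cap\Sigma_2)\cdot\Sigma_3$. I would do this by a local analysis along the double curve $c$: near a triple point $p\in\Sigma_1\cap\Sigma_2\cap\Sigma_3$, the arc of $K_3=\partial\Sigma_3$ running alongside $c$ threads a neighborhood in which the intersection word picks up exactly one factor $[m_1,m_2]^{\pm1}$, the sign being dictated by the local intersection data of the three oriented surfaces at $p$. Summing over all triple points then matches $a$ with the intersection number up to an overall sign. This is the geometric heart of the argument and is, in effect, the Massey-product/triple-intersection interpretation of Milnor's invariant specialized to the derivative.

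The main obstacle — and essentially the only delicate point — is the bookkeeping of orientations. I expect the general-position setup and the Magnus computation to be routine, but pinning down the overall sign so that it reproduces the minus sign in $\bar{\mu}_{123}=-\text{lk}(c,K_3)$ requires careful use of the stated orientation convention for $c$ (namely that $(c,\text{positive normal to }\Sigma_1,\text{positive normal to }\Sigma_2)$ is a positive frame for $\mathbb{S}^3$) together with the sign conventions built into the Magnus expansion and into $\text{lk}$. As a consistency check I would verify the conclusion on the Borromean rings, where $\bar{\mu}_{123}=\pm1$ and the derivative $c=\Sigma_1\cap\Sigma_2$ visibly links $K_3$ exactly once.
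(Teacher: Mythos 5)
The paper offers no proof of this statement --- it is quoted directly from Cochran (\cite{cochran1990derivatives}, Appendix B.II) --- so your attempt must stand on its own, and it has a genuine gap at its central step. You first arrange $\Sigma_i\cap K_j=\emptyset$ for \emph{all} $i\neq j$, and then propose to compute the longitude $l_3$ modulo $\pi^3$ by recording a letter $m_i^{\pm1}$ at each intersection of $K_3$ with $\Sigma_i$. But under your own general-position hypotheses $K_3$ is disjoint from $\Sigma_1$ and $\Sigma_2$, so the recorded word is empty and your integer $a$ vanishes identically; taken literally, your argument proves $\bar{\mu}_{123}(L)=0$ for every such link, which your own Borromean-rings check refutes. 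The underlying error is the assertion that the surface-intersection word equals the longitude in $\pi/\pi^3$ (``by the definition of the invariant, $\bar{\mu}_{123}=a$''). Reading intersections with $\Sigma_1\cup\Sigma_2$ defines a homomorphism $\pi_1(\mathbb{S}^3\smallsetminus\nu(K_1\cup K_2))\to F(m_1,m_2)$ only when $\Sigma_1\cap\Sigma_2=\emptyset$ (this is essentially the boundary-link case, where both sides of the identity are zero); when the surfaces meet along $c$, pushing an arc of a loop across $c$ changes the recorded word by an insertion of $[m_1^{\pm1},m_2^{\pm1}]$, and the discrepancy between the recorded word and the true longitude in $F/F^3$ is measured exactly by triple points. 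Quantifying that correction is the entire content of the theorem --- it is precisely Mellor--Melvin's Theorem \ref{thm_melmel}, the tool the paper itself combines with this statement to prove Theorem \ref{CochranTripleLinking} --- so your ``routine'' Magnus step assumes what must be proved. Your local analysis is also aimed at the wrong object: at a triple point it is the \emph{interior} of $\Sigma_3$, not its boundary $K_3$, that crosses the double curve $c$, so $K_3$ need not pass anywhere near the triple points and picks up no factor $[m_1,m_2]^{\pm1}$ there.

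A repair that stays within your framework does exist. Since $\Sigma_3\cap(K_1\cup K_2)=\emptyset$, the longitude $l_3$ bounds $\Sigma_3$ inside $X=\mathbb{S}^3\smallsetminus\nu(K_1\cup K_2)$, hence $l_3=\prod_j[x_j,y_j]$ for a symplectic system of curves $x_j,y_j$ on $\Sigma_3$, based suitably. Because $\text{lk}(K_1,K_2)=0$ gives $\pi/\pi^3\cong F/F^3$, and because the class of $[x,y]$ in $F^2/F^3\cong\mathbb{Z}\cdot[m_1,m_2]$ depends only on the images of $x,y$ in $H_1(X)\cong\mathbb{Z}^2$, one obtains
\[
a=\sum_j\bigl(\text{lk}(x_j,K_1)\,\text{lk}(y_j,K_2)-\text{lk}(x_j,K_2)\,\text{lk}(y_j,K_1)\bigr),
\]
which is the evaluation on the fundamental class of $\Sigma_3$ of the cup product of the restrictions of the duals of $\Sigma_1$ and $\Sigma_2$, i.e.\ $\pm(\Sigma_1\cap\Sigma_2)\cdot\Sigma_3=\pm\,\text{lk}(c(K_1,K_2),K_3)$; the orientation bookkeeping you outline in your last paragraph then fixes the sign. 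This replaces your invalid intersection-word step with a correct one, but note it is a different argument from the one you wrote down.
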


Link derivatives will appear in our homological calculations ahead. Note that we may think of $c(J,K)$ as an element of $H_{1}(\Sigma_{J};\mathbb{Z})$. Let $A_J$ be a Seifert matrix for $\Sigma_{J}$ and let $B=(A_J-A_J^{\uptau})^{-1}$ as matrices. We can think of $B$ as the matrix that represents $\mathcal{F}^{-1} :H_{1}(\mathbb{S}^{3}\smallsetminus\nu\Sigma_{J};\mathbb{Z}) \rightarrow H_{1}(\Sigma_{J};\mathbb{Z})$ with respect to the appropriate basis (see Section \ref{sec_on_tref}). Recall that $A_J$ and $A_J^{\uptau}$, considered as the matrices for linear transformations of homology groups, are the negative and positive push-offs of $\Sigma_{J}$, respectively. Since we may assume that $\Sigma_{J}$ and $\Sigma_{K}$ are in general position, the curve $c(J,K)$ on $\Sigma_{J}$ can be pushed off so that its positive and negative push-offs lie on $\Sigma_{K}$. Now observe that cutting $\mathbb{S}^{3}$ along $\Sigma_{J}$ cuts $\Sigma_{K}$ so that its boundary consists of three components: $K$ and the positive and negative push-offs of $c(J,K)$. Then as elements of $H_{1}(\mathbb{S}^{3}\smallsetminus\nu\Sigma_{J})$, we have that: 
\[
[K]=(A_J-A_J^{\uptau})[c(J,K)].
\]
We record this fact as a lemma for future reference (see also \cite{cochran1985geometric}, Section 8).
    
\begin{lemma} \label{lemma_B_deriv}
Suppose that $L=J \sqcup K$ and $lk(J,K)=0$. Let $\Sigma_{J}$, $\Sigma_{K}$ be Seifert surfaces such that $J=\partial \Sigma_J$, $K=\partial \Sigma_K$, $\Sigma_J \cap K=\emptyset$, and $\Sigma_K \cap J=\emptyset$. Let $[c(J,K)] \in H_{1}(\Sigma_{J};\mathbb{Z})$ be the homology class of a derivative of $L$. Furthermore, let $B=(A_J-A_J^{\uptau})^{-1}$ be the matrix that represents $\mathcal{F}^{-1}:(\mathbb{S}^{3}\smallsetminus \nu\Sigma_{J};\mathbb{Z}) \rightarrow H_{1}(\Sigma_{J};\mathbb{Z})$, where $A_J$ is a Seifert matrix of $J$. Then $B[K]=[c(J,K)]$.
\end{lemma}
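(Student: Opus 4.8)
The statement is an immediate consequence of the identity $[K]=(A_J-A_J^{\uptau})[c(J,K)]$ in $H_1(\mathbb{S}^3\smallsetminus\nu\Sigma_J;\mathbb{Z})$ derived just above. Indeed, since $J$ is a connected sum of trefoils, $\mathcal{F}$ is represented by $F=A_J-A_J^{\uptau}$ with $\det F=\pm1$, so $B=F^{-1}$ exists over $\mathbb{Z}$ and represents $\mathcal{F}^{-1}$ (Section \ref{sec_on_tref}). Multiplying the identity on the left by $B$ gives
\[
B[K]=B(A_J-A_J^{\uptau})[c(J,K)]=[c(J,K)],
\]
which is exactly the claim. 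Thus all of the content lies in the preceding identity, and I would organize the proof around re-deriving it cleanly.

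For that identity, I would first arrange $\Sigma_J\cap K=\emptyset$, $\Sigma_K\cap J=\emptyset$, and $c(J,K)=\Sigma_J\cap\Sigma_K$ a single simple closed curve lying in the interiors of both surfaces, as permitted by the tubing and general-position adjustments of Cochran \cite{cochran1985geometric}. Passing to the complement $\mathbb{S}^3\smallsetminus\nu\Sigma_J\cong\Sigma_J\times[0,1]$ cuts $\Sigma_K$ precisely along $c(J,K)$, since $\Sigma_K$ meets $\nu\Sigma_J$ only near that curve. The surviving piece of $\Sigma_K$ is a compact oriented subsurface of the complement whose oriented boundary consists of $K$ together with the two push-offs $c^+,c^-$ of $c(J,K)$ off $\Sigma_J$. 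Because the oriented boundary of a compact surface is null-homologous, this yields $[K]=[c^-]-[c^+]$.

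To finish, I would identify the push-off classes using the convention recalled earlier in this section, under which the negative and positive push-offs of a class in $H_1(\Sigma_J;\mathbb{Z})$ are represented in the dual basis by $A_J$ and $A_J^{\uptau}$, respectively. Substituting $[c^-]=A_J[c(J,K)]$ and $[c^+]=A_J^{\uptau}[c(J,K)]$ into $[K]=[c^-]-[c^+]$ then recovers $[K]=(A_J-A_J^{\uptau})[c(J,K)]$.

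The step I expect to be the main obstacle is the orientation bookkeeping: pinning down the induced boundary orientation on the cut-open copy of $\Sigma_K$ relative to the prescribed orientation of the derivative $c(J,K)$ (the one making the triple $(c(J,K),$ positive normal to $\Sigma_J$, positive normal to $\Sigma_K)$ agree with the orientation of $\mathbb{S}^3$), so that the push-offs enter with exactly the signs producing $A_J-A_J^{\uptau}$ rather than $A_J^{\uptau}-A_J$. The homological and matrix-algebra steps are routine once these conventions are fixed.
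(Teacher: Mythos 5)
Your proposal is correct and takes essentially the same route as the paper: the paper establishes $[K]=(A_J-A_J^{\uptau})[c(J,K)]$ in $H_1(\mathbb{S}^3\smallsetminus\nu\Sigma_J;\mathbb{Z})$ by exactly your cutting argument (cutting along $\Sigma_J$ severs $\Sigma_K$ into a surface bounded by $K$ together with the positive and negative push-offs of the derivative, with the push-off classes given by $A_J^{\uptau}[c(J,K)]$ and $A_J[c(J,K)]$) and then records the lemma by applying $B=F^{-1}$. One minor caveat: your appeal to $J$ being a connected sum of trefoils is unnecessary and would needlessly narrow the lemma, which is stated for arbitrary $J$ with $\mathrm{lk}(J,K)=0$; the matrix $F=A_J-A_J^{\uptau}$ represents the intersection form of $\Sigma_J$ and is therefore unimodular for any Seifert surface, so $B$ exists over $\mathbb{Z}$ in full generality.
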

      
\subsection{Method 2: Counting local intersections} \label{sec_melmel}  The second method, due to Mellor-Melvin \cite{mellor2003geometric}, involves counting two combinatorial objects: triple points and intersections between a component of the link and the other two Seifert surfaces. Consider the link $L=K_{1}\sqcup K_{2}\sqcup K_{3}$ such that each component has a Seifert surface $\Sigma_{i}$ in general position with respect to the others. Let $\Sigma=\Sigma_1 \cup \Sigma_2 \cup \Sigma_3$. Define $t_{123}(\Sigma)=\#(\Sigma_{K_{1}}\cap \Sigma_{K_{2}} \cap \Sigma_{K_{3}})$ counted with sign so that a triple point is positive if and only if the ordered basis of normal vectors to $\Sigma_{K_{1}},\Sigma_{K_{2}},\Sigma_{K_{3}}$ agree with the standard orientation of $\mathbb{S}^{3}$.

Next, we count component-surface intersections. Chose a base point on each component. Then for each  $K_{i}$ we build a word $w_{i}$ in the letters $\{1^{\pm},2^{\pm},3^{\pm}\}$ as follows: from the base point walk in the direction of the orientation of the component and record with sign the component whose Seifert surface you intersect. Notice that $w_{i}$ will be a word in $\{1^{\pm},2^{\pm},3^{\pm}\}\smallsetminus\{i^{\pm}\}$.  Once we have a particular word $w_{i}$, its Magnus expansion is found by substituting $j^{+} \to (1+h_{j})$ and 
  $j^{-} \to (1-h_{j}+h_{j}^{2}- \ldots)$. Let $e_{ijk}$ be the coefficient of the word $h_{i}h_{j}$ in $w_{k}$, where 
  $i,j,k \in \{1,2,3\}$ are distinct. Define $m_{123}(\Sigma)=e_{123}+e_{231}+e_{312}$. 
   
\begin{theorem}[Mellor-Melvin \cite{mellor2003geometric}] \label{thm_melmel} Consider the link $L=K_{1} \sqcup K_{2} \sqcup K_{3}$ with Seifert surfaces as described above. Then:
\[
\bar{\mu}_{123}(L) \equiv m_{123}(\Sigma)-t_{123}(\Sigma)\ mod(\delta),
\]
where $\delta$ is the greatest common divisor of the pairwise linking numbers of $K_{1},K_{2}$, and $K_{3}$.
\end{theorem}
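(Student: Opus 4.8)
The plan is to compute $\bar{\mu}_{123}(L)$ directly from Milnor's definition, as the coefficient of $h_1 h_2$ in the Magnus expansion of the longitude $\ell_3$ taken modulo the third lower central series term $\pi^3$, and to read off $\ell_3$ as a word in the meridians using the Seifert surfaces. Since $\ell_3$ may be realized as a pushoff of $K_3$ that misses $\Sigma_3$, the class $[\ell_3] \in \pi_1(\mathbb{S}^3 \smallsetminus L)$ is recorded by the transverse intersections of $K_3$ with $\Sigma_1 \cup \Sigma_2$. First I would fix general-position surfaces and basepoints and write
\[
\ell_3 = \prod_p \gamma_p^{-1}\, \mu_p^{\varepsilon_p}\, \gamma_p,
\]
the product over the signed intersection points $p$ of $K_3$ with $\Sigma_1 \cup \Sigma_2$ in their cyclic order along $K_3$, where $\gamma_p$ is the arc of $K_3$ from the basepoint to $p$, and $\mu_p$ is a local meridian of the surface $\Sigma_{a_p}$ pierced at $p$ (so $a_p \in \{1,2\}$).

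The core of the argument is to expand this product under the Magnus homomorphism and isolate the degree-two part modulo $\pi^3$, organizing the $h_ih_j$-terms by two sources. The first consists of all terms intrinsic to $K_3$: the products of the degree-one contributions $\varepsilon_p h_{a_p}$ of distinct factors, together with the commutators coming from conjugation by the arcs $\gamma_p$. Computed with the \emph{standard} meridian generators, these terms depend only on the signed cyclic sequence of intersections of $K_3$ with $\Sigma_1,\Sigma_2$, and hence reproduce, up to the sign conventions fixed in the definition and modulo products of pairwise linking numbers, the coefficient $e_{123}$ of $w_3$. The second source is the discrepancy between each \emph{local} meridian $\mu_p$ and the standard generator $m_{a_p}$: one transports $\mu_p$ back toward $K_{a_p}$ along $\Sigma_{a_p}$, and the resulting degree-two correction is controlled by the position of $p$ relative to the curve $\Sigma_{a_p}\cap\Sigma_b$ on $\Sigma_{a_p}$, where $b$ is the third index. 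Summing these corrections over the punctures $p$, which all lie on $K_3=\partial\Sigma_3$, and converting this boundary count into an interior intersection via a Stokes-type argument, yields the algebraic count $\#\big(\Sigma_3 \cap (\Sigma_1\cap\Sigma_2)\big)=\pm\, t_{123}(\Sigma)$; this simultaneously recovers Cochran's derivative-curve picture, since $\Sigma_1\cap\Sigma_2$ is $c(K_1,K_2)$ and $\#(\Sigma_3\cap c)=\text{lk}(c,K_3)$.

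Assembling these contributions expresses the $h_1h_2$-coefficient of $\ell_3$ as a word-count term plus a triple-point term. To reach the symmetric form in the statement I would repeat the computation with the three components cyclically permuted, invoke Milnor's cyclic symmetry $\bar{\mu}_{123}=\bar{\mu}_{231}=\bar{\mu}_{312}$, and combine the three results so that the word contributions sum to $m_{123}(\Sigma)=e_{123}+e_{231}+e_{312}$ while the triple-point contributions collapse to a single signed copy of $t_{123}(\Sigma)$. Finally, since $\ell_3$ is well-defined only up to its framing and up to the choices of basepoints and transport arcs, and since each such change alters the relevant words by meridians whose abelianized content is a pairwise linking number, I would check that the whole identity is independent of these choices modulo $\delta$; the pairwise-unlinked case, where every $w_i$ is empty and the formula reduces to $\bar{\mu}_{123}=-t_{123}$, matches Cochran's Theorem \ref{thm_tim} and serves as a consistency check.

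The main obstacle is the second paragraph: pinning down the exact sign with which each point of $\Sigma_1\cap\Sigma_2\cap\Sigma_3$ enters the meridian-transport correction — comparing the Magnus sign of a commutator $[m_i,m_j]$ against the normal-vector orientation convention defining $t_{123}$ — and then verifying that after cyclic symmetrization the triple-point contributions combine into \emph{exactly one} signed copy of $t_{123}(\Sigma)$ rather than an unwanted multiple. This is a delicate piece of oriented bookkeeping, and making the framing and basepoint indeterminacies land precisely in the ideal generated by $\delta$ is what forces the mod-$\delta$ reduction in the statement.
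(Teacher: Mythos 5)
First, a point of comparison: the paper offers no proof of Theorem \ref{thm_melmel} at all --- it is imported as a citation from Mellor--Melvin \cite{mellor2003geometric} and used as a black box in the proof of Theorem \ref{CochranTripleLinking} --- so your proposal can only be measured against the original argument, not against anything in this text. Your overall architecture does match the shape of that argument: express a longitude as a product of conjugated local meridians indexed by the intersections of the component with the other two Seifert surfaces, apply the Magnus expansion, and trace the degree-two terms to a word-coefficient part plus meridian-transport corrections tied to the intersection curves $\Sigma_i \cap \Sigma_j$.

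However, there is a genuine structural gap at your symmetrization step. Each of your three cyclic computations produces a congruence of the form $\bar{\mu}_{123} \equiv e_{ijk} - (\text{triple-point term}) \pmod{\delta}$; summing them and invoking $\bar{\mu}_{123}=\bar{\mu}_{231}=\bar{\mu}_{312}$ yields $3\,\bar{\mu}_{123} \equiv m_{123}(\Sigma) - (\text{three triple-point terms}) \pmod{\delta}$, and one cannot divide by $3$ in $\mathbb{Z}/\delta\mathbb{Z}$ (take $\delta=3$). So the triple-point contributions cannot be made to ``collapse to a single signed copy of $t_{123}(\Sigma)$'' by combining the three readings; for the identity as stated, a \emph{single}-longitude computation must already account for all three coefficients $e_{123}$, $e_{231}$, $e_{312}$ --- these arise because the transport arcs $\gamma_p$ run along $\Sigma_1$ and $\Sigma_2$, and their degree-one Magnus content records intersections of those surfaces with the remaining components --- together with exactly one copy of $t_{123}(\Sigma)$. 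Two further soft spots: your Stokes-type conversion of the boundary corrections into $\#\bigl(\Sigma_3 \cap (\Sigma_1 \cap \Sigma_2)\bigr)$ identifies $\Sigma_1 \cap \Sigma_2$ with a closed derivative curve $c(K_1,K_2)$, but when the pairwise linking numbers are nonzero this intersection is a $1$-manifold \emph{with boundary} (arcs ending at points of $K_1 \cap \Sigma_2$ and $K_2 \cap \Sigma_1$), so that identification --- and the consistency check against Theorem \ref{thm_tim} --- is valid only in the pairwise-unlinked case; and the sign bookkeeping you defer at the end is not a residual detail but the actual content of the theorem's sign convention (the ordered-normal-frame definition of $t_{123}$ in Section \ref{sec_melmel}), so deferring it leaves the claimed equality unverified even up to sign. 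As written, this is a credible plan for reconstructing Mellor and Melvin's proof, but not yet a proof.
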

      
\subsection{Proof of Theorem \ref{CochranTripleLinking}} \label{sec_trip_proof} Now let us return to the setting of virtual covers and the proof of Theorem \ref{CochranTripleLinking}. Let $L=J\sqcup K$ be a link where $J$ is fibered and $\text{lk}(J,K)=0$. Let $A_J$ be a Seifert matrix for a fiber $\Sigma_{J}$ of $J$. Furthermore, suppose that $L$ is an SSF link and let $\upsilon$ be the associated virtual knot. Let $x$ be crossing in a ball of $K$, as in Figure \ref{fig_cross}. Let $L_{x}=J \sqcup K_{2} \sqcup K_{3}$ be the link formed by smoothing $K$ at $x$ in the orientation preserving fashion, again ordering the components so that $K_{2}$ is on the left and $K_{3}$ is on the right at $x$. For notational purposes write $J=K_{1}$. Let $k_{2},k_{3}$ be the diagrams of $K_2,K_3$ on $\Sigma_{J}$, respectively. In terms of homology, $A_{J}[k_{2}]=[K_{2}]$ and  $A_{J}[k_{3}]=[K_{3}]$ in $H_{1}(\mathbb{S}^{3}\smallsetminus \nu\Sigma_{J};\mathbb{Z})$. Recall from Section \ref{sec_index} that $\emph{Index}(x)=[k_{2}]\cdot [k_{3}]$.
  
  \begin{lemma}\label{lemma_milnor_next}
  In the situation described above, $m_{123}(\Sigma)=0$.
  \end{lemma}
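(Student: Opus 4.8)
The plan is to compute $m_{123}(\Sigma)$ directly from its Mellor--Melvin definition, $m_{123}(\Sigma)=e_{123}+e_{231}+e_{312}$ (Section~\ref{sec_melmel}), and to show that each of the three coefficients vanishes for a convenient choice of Seifert surfaces. I would take $\Sigma_1=\Sigma_J$ to be the fiber of $J=K_1$ and choose Seifert surfaces $\Sigma_2,\Sigma_3$ for $K_2,K_3$ in general position with the fiber. The single geometric input driving everything is that, because $L$ is an SSF link, both $K_2$ and $K_3$ lie in $\mathbb{S}^3\smallsetminus\nu\Sigma_J=\Sigma_J\times[0,1]$; smoothing the crossing $x$ does not change this. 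Consequently $K_2$ and $K_3$ are disjoint from $\Sigma_1=\Sigma_J$.

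First I would handle the two terms $e_{312}$ and $e_{123}$. Recall that $w_2$ is the word recording the signed intersections of $K_2$ with the surfaces $\Sigma_1$ and $\Sigma_3$ of the other two components. Since $K_2\cap\Sigma_1=\emptyset$, the word $w_2$ contains no letter $1^{\pm}$, so its Magnus expansion carries no power of $h_1$, and the coefficient of $h_3h_1$ is zero; that is, $e_{312}=0$. The identical argument applied to $w_3$, using $K_3\cap\Sigma_1=\emptyset$, shows that $w_3$ carries no $h_1$, whence $e_{123}=0$.

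The remaining term $e_{231}$, the coefficient of $h_2h_3$ in the word $w_1$ traced along $J=K_1$, is the one requiring care and is where I expect the only real work. Here I would first observe that disjointness of $K_2,K_3$ from the Seifert surface $\Sigma_J$ forces $\text{lk}(J,K_2)=\text{lk}(J,K_3)=0$. This vanishing lets me invoke the tubing-out procedure of Section~\ref{sec_deriv} to replace $\Sigma_2,\Sigma_3$ by Seifert surfaces disjoint from $J$, without disturbing the disjointness of $K_2,K_3$ from $\Sigma_1$ established above. With $J$ meeting neither $\Sigma_2$ nor $\Sigma_3$, the word $w_1$ is empty and $e_{231}=0$. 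Summing the three computations gives $m_{123}(\Sigma)=0$.

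The main obstacle is essentially bookkeeping rather than geometry: one must verify that the fiber condition lands exactly two of the three $e$-coefficients in components whose intersection words omit the letter $1^{\pm}$, and that the third coefficient can be killed by a legitimate re-choice of $\Sigma_2,\Sigma_3$ that simultaneously respects general position and the disjointness $K_2,K_3\cap\Sigma_1=\emptyset$ used to annihilate the first two.
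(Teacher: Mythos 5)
Your proof is correct and follows essentially the same route as the paper's: use the SSF condition to make $K_2,K_3$ disjoint from the fiber $\Sigma_1=\Sigma_J$, tube out $\Sigma_2,\Sigma_3$ along $J$ (legitimate since $\operatorname{lk}(J,K_2)=\operatorname{lk}(J,K_3)=0$) so that $w_1$ is empty, and kill $e_{123}$ and $e_{312}$ because $w_3$ and $w_2$ contain no letter $1^{\pm}$. In fact your version slightly streamlines the paper's argument, which splits into cases according to whether $\operatorname{lk}(K_2,K_3)$ vanishes; your observation that the absence of $h_1$ from the Magnus expansions of $w_2$ and $w_3$ suffices covers both cases at once, and your worry about the tubing step is harmless since tubing modifies only $\Sigma_2,\Sigma_3$, not the knots or $\Sigma_1$.
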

  \begin{proof}
  Since $L$ is in SSF, it follows that both $J \sqcup K_2$ and $J \sqcup K_3$ are in SSF and $\text{lk}(J,K_{2})=\text{lk}(J,K_{3})=0$. Thus, we may assume that $J$ does not intersect the Seifert surface for $K_{2}$ nor the Seifert surface for $K_{3}$. It follows that $w_{1}$ is the empty word and so $e_{231}=0$. With regards to $K_{2}$ and $K_{3}$ we must consider two cases: $\text{lk}(K_{2},K_{3})=0$ or $\text{lk}(K_{2},K_{3})\neq 0$. In the case that $\text{lk}(K_{2},K_{3})=0$, we may assume again that the Seifert surface for $K_{2}$ does not intersect $K_{3}$ and vice versa. Again we see that $w_{2}$ and $w_{3}$ are the empty words. Thus $e_{123}=e_{312}=0$. In the case that $\text{lk}(K_{2},K_{3})\neq 0$, notice that $w_{2}$ is a word in only the letters $3^{\pm}$ and $w_{3}$ is a word only in the letters $2^{\pm}$. In both of these cases the Magnus expansions of these words contain only powers of a single variable and we may again conclude that $e_{123}=e_{312}=0$. Thus the claim follows. 
  \end{proof}
  
The previous lemma tells us that the triple linking number can be computed as a signed sum of triple points and in fact that $\bar{\mu}_{123}(L_x)=-t_{123}\pmod \delta$. Moreover, as $\text{lk}(J,K_{2})=\text{lk}(J,K_{3})=0$, it follows that $\delta=\text{lk}(K_{2},K_{3})$.
  
\begin{lemma} \label{lemma_milnor_last} For $L_x=J \sqcup K_{2} \sqcup K_{3}$ as above, coming from smoothing at a crossing $x$,
\[
  \emph{Index}(x)=t_{123}(L).
\]
\end{lemma}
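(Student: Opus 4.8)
My plan is to reduce the count of triple points $t_{123}(\Sigma)$, where $\Sigma = \Sigma_J \cup \Sigma_2 \cup \Sigma_3$ and $\Sigma_2,\Sigma_3$ are Seifert surfaces for $K_2,K_3$, to an intersection number of two curves on the fiber $\Sigma_J$, and then to identify that intersection number with $[k_2]\cdot[k_3]=\emph{Index}(x)$ via the homological machinery already in place. First I would observe that, since $L$ is in SSF, we have $\text{lk}(J,K_2)=\text{lk}(J,K_3)=0$, so after a general-position perturbation the derivatives $c(J,K_2)=\Sigma_J\cap\Sigma_2$ and $c(J,K_3)=\Sigma_J\cap\Sigma_3$ are embedded curves on $\Sigma_J$. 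A point of $\Sigma_J\cap\Sigma_2\cap\Sigma_3$ lies on both of these curves, and conversely a transverse crossing of the two curves on $\Sigma_J$ is exactly a triple point. Hence the triple points are in bijection with the crossings of $c(J,K_2)$ and $c(J,K_3)$ on $\Sigma_J$.

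Next I would verify that this bijection is sign-preserving, so that $t_{123}(\Sigma)$ equals the algebraic intersection number $[c(J,K_2)]\cdot[c(J,K_3)]$ computed by the intersection form on $\Sigma_J$. At a triple point $p$ with outward normals $n_1,n_2,n_3$ to $\Sigma_J,\Sigma_2,\Sigma_3$, the orientation convention for derivatives from Section \ref{sec_deriv} forces the tangent to $c(J,K_i)$ to be a positive multiple of $n_1\times n_i$. A short vector-triple-product computation then shows that the sign of the curve crossing on $\Sigma_J$ (with $\Sigma_J$ oriented by $n_1$) equals $\text{sign}\,\det(n_1,n_2,n_3)$, which is precisely the Mellor--Melvin sign of the triple point. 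This is the step where the orientation conventions must be tracked with care, and I expect it to be the main obstacle; the rest is formal.

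With the sign-check in hand I would pass to homology using the matrices of Section \ref{sec_on_tref}. Write $F=A_J-A_J^{\uptau}$ for the intersection form, so that $u\cdot v=u^{\uptau}Fv$. Combining Lemma \ref{lemma_B_deriv} (giving $[c(J,K_i)]=F^{-1}[K_i]$) with the SSF relation $A_J[k_i]=[K_i]$ yields $[c(J,K_i)]=F^{-1}A_J[k_i]$. Therefore
\[
t_{123}(\Sigma)=[c(J,K_2)]\cdot[c(J,K_3)]=[k_2]^{\uptau}\,A_J^{\uptau}\,(F^{\uptau})^{-1}\,A_J\,[k_3].
\]
Now I would invoke the special relations for a connected sum of trefoil fibers: $F^{\uptau}=-F=F^{-1}$ gives $(F^{\uptau})^{-1}=F$, and $A_J^{\uptau}FA_J=F$ then collapses the expression to $[k_2]^{\uptau}F[k_3]=[k_2]\cdot[k_3]=\emph{Index}(x)$.

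In summary, the argument is geometric in its first half (identifying and orienting the triple points as crossings of the two derivative curves on the fiber) and purely algebraic in its second half (the matrix manipulation enabled by the trefoil-sum normal form). The genuinely delicate point is the orientation bookkeeping that makes the identity an \emph{exact} equality $\emph{Index}(x)=t_{123}(L)$ rather than an equality up to sign; once the signs are pinned down, the relations $F^{-1}=-F=F^{\uptau}$ and $A_J^{\uptau}FA_J=F$ do all the remaining work.
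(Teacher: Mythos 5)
Your proof is correct and follows essentially the same route as the paper's: you identify the triple points with the crossings of the derivative curves $c(J,K_2)$, $c(J,K_3)$ on $\Sigma_J$ (with the same orientation check that the crossing sign equals $\mathrm{sign}\det(n_1,n_2,n_3)$), then apply Lemma \ref{lemma_B_deriv}, the SSF relation $A_J[k_i]=[K_i]$, and the trefoil-fiber identities $F^{-1}=-F=F^{\uptau}$ and $A_J^{\uptau}FA_J=F$ to collapse $[c(J,K_2)]\cdot[c(J,K_3)]$ to $[k_2]^{\uptau}F[k_3]=\emph{Index}(x)$. The only cosmetic difference is that you simplify $(F^{-1})^{\uptau}FF^{-1}$ to $(F^{\uptau})^{-1}$ before invoking the special relations, whereas the paper substitutes $B^{\uptau}F B=(F^{\uptau})^{\uptau}$ directly; both reduce to the same matrix identity.
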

  
  \begin{proof}
  Think of the derivatives $c(J,K_{2})$ and $c(J,K_{3})$ as curves that lie on the fiber $\Sigma_{J}$. Observe that the double points of these two derivatives are the triple points of the intersection of the three Seifert surfaces. Recall that a triple point of the intersection is positive when in its neighborhood $(\Sigma_{J},\Sigma_{K_{2}},\Sigma_{K_{3}})$ has the standard right handed orientation. Let $n_{i}$ be the positive normal to $\Sigma_{K_{i}}$. Then the derivative $c(J,K_{i})$ is oriented so that $(c(J,K_{i}),n_{1}, n_{i})$ has the standard right handed orientation \cite{cochran1990derivatives}. It is not hard to determine that $t_{123}(L)=[c(J,K_{2})]\cdot [c(J,K_{3})]$ (according to the standard right hand rule for the intersection form on $\Sigma_{J}$). Using Lemma \ref{lemma_B_deriv} and the discussion of Section \ref{sec_on_tref}, we have:
   \begin{eqnarray*}
  t_{123}&=&[c(J,K_{2})] \cdot [c(J,K_{3})]\\
  &=&(B[K_{2}])^{\uptau}F(B[K_{3}])\\
  &=&(BA_{J}[k_{2}])^{\uptau}F(BA_{J}[k_{3}])\\
  &=&[k_{2}]^{\uptau}A_{J}^{\uptau}B^{\uptau}FBA_{J}[k_{3}]\\
  &=&[k_{2}]^{\uptau}A_{J}^{\uptau}(F^{\uptau})^{\uptau}A_{J}[k_{3}]\\
  &=&[k_{2}]^{\uptau}A_{J}^{\uptau}FA_{J}[k_{3}]\\
  &=&[k_{2}]^{\uptau}F[k_{3}]\\
  &=&[k_{2}]\cdot [k_{3}]\\
  &=&\emph{Index}(x). 
  \end{eqnarray*}
  \end{proof}

The proof of Theorem \ref{CochranTripleLinking} now follows by combining Lemmas \ref{thm_melmel}, \ref{lemma_milnor_next}, and \ref{lemma_milnor_last}. We conclude this section with an example computation.

\begin{example} Consider the SSF link $L=J\sqcup K$ on the left in figure \ref{TonT1}. The virtual trefoil is the associated virtual knot of this link (exercise). The crossing labeled $x$ has index $-1$ as can be computed using the intersection form $[k_{2}]\cdot [k_{3}]$. Notice also that $\text{lk}(K_{2},K_{3})=0$. By Cochran's result (Theorem \ref{thm_tim}), $\bar{\mu}_{123}(L_x)=1$. Figure \ref{TonT2} shows Seifert surfaces for each component of the link, along with a derivative $c(J,K_{2})$. It can readily be seen that $\text{lk}(c(J,K_{2}),K_{3})=-1$. Thus applying theorem \ref{CochranTripleLinking} we can observe that $-\emph{Index}(x)=\bar{\mu}_{123}(L_x)=-\text{lk}(c(J,K_{2}),K_{3})=1$. 
\end{example}

  \begin{figure}
  \fcolorbox{black}{white}{
  \def\svgwidth{4.5in}
  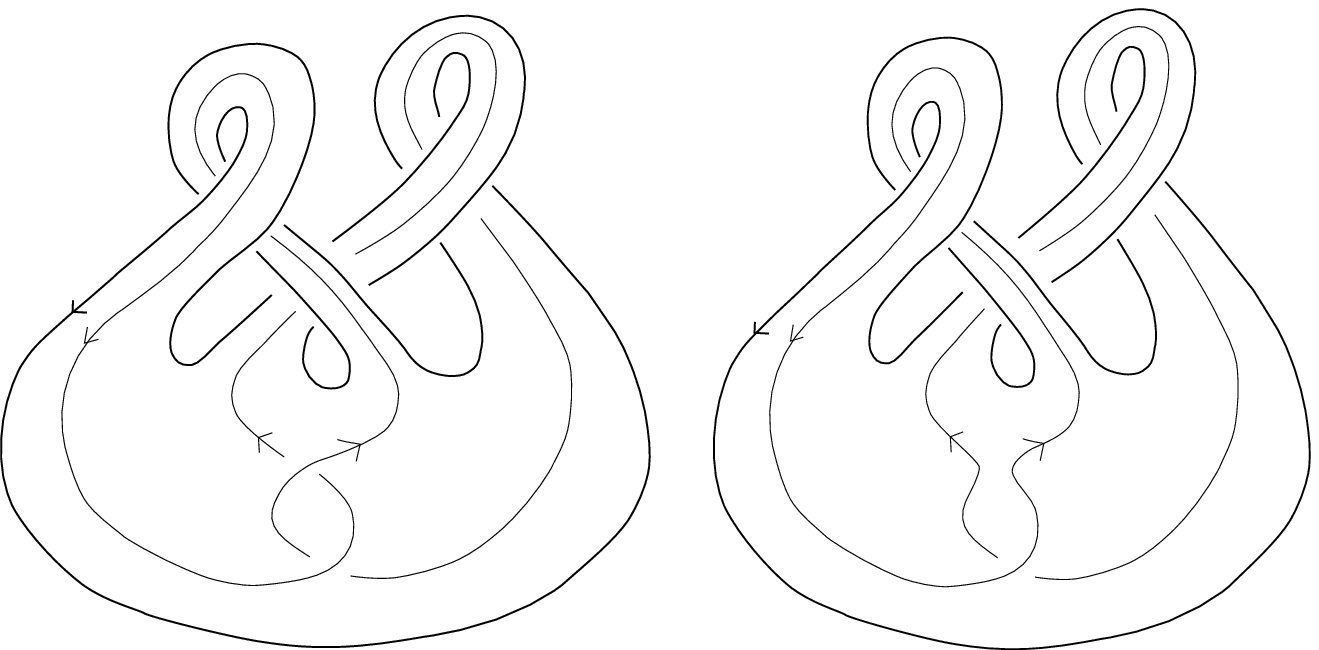}
  \caption{(Left) The link $J \sqcup K$ in SSF. (Right) The three component link found by smoothing at $x$}
  \label{TonT1}
 \end{figure}
  
  \begin{figure}
  \fcolorbox{black}{white}{
  \def\svgwidth{3in}
  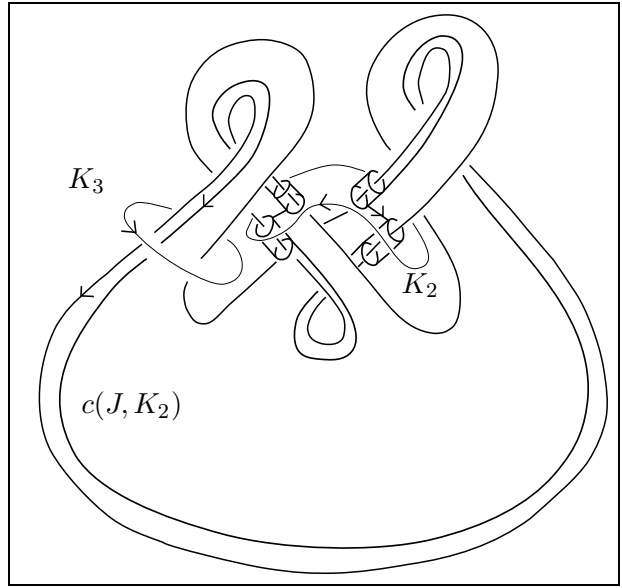}
  \caption{The Seifert surfaces for each component. Two tubes are added to $\Sigma_{K_{2}}$, only portions of which are show. The derivative appears as the intersection of this tubed surface as shown.}
  \label{TonT2}
  \end{figure}
  
\section{Fiber Stabilization of Links} \label{sec_fiber} 

\subsection{Theorem statement and its motivation} The previous sections have shown how virtual covers can be used to relate virtual knot and classical link invariants for certain families of links. In this section we introduce \emph{fiber stabilization of links}. This extends virtual covers to all multi-component links. The extension improves upon \cite{chrisman2014virtual}, where virtual covers were extended to links $J \sqcup K$, with $K$ a knot and $J$ a virtually fibered link (in the sense of Thurston). 

\begin{definition} Let $L=J \sqcup K$ be an $(n+1)$-component link with $K$ a knot. A \emph{fiber stabilization} of $L$ is the addition of an unkotted component $J_0$ to $L$ so that the sublink $J_0 \sqcup J$ is a fibered link and $K$ has algebraic intersection number $0$ with some fiber $\Sigma_0$ of $J_0 \sqcup J$. The fiber stabilized link is denoted $L_0=J_0 \sqcup J \sqcup K$.
\end{definition}

\begin{figure}[htb]
\fcolorbox{black}{white}{
\begin{tabular}{ccc}
\begin{tabular}{c} \def\svgwidth{1.3in} 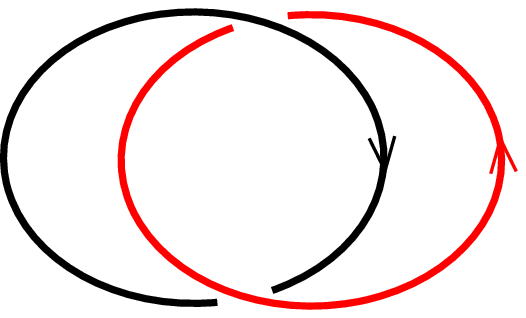 \end{tabular} & \begin{tabular}{c}  \def\svgwidth{1.3in} 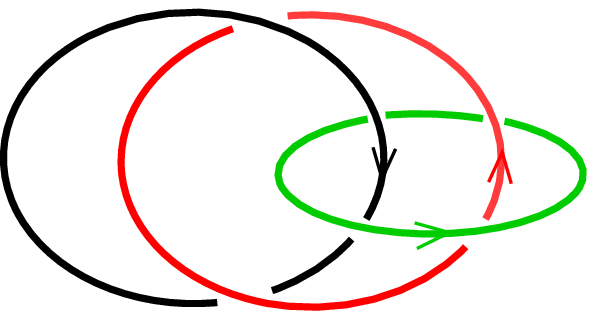 \end{tabular} & \begin{tabular}{c} \def\svgwidth{1.3in} 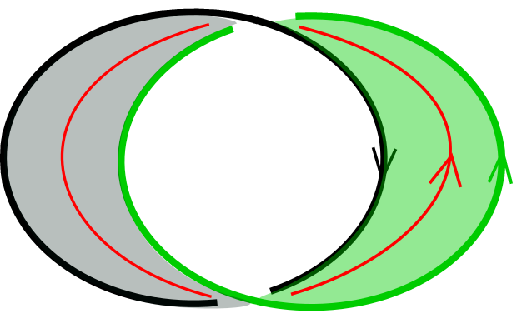 \end{tabular}
\end{tabular}}
\caption{A fiber stabilization of $L=J \sqcup K$ by adding an unknotted component $J_0$. A fiber $\Sigma_0$ of $J_0 \sqcup J$ is given on the right.} \label{fig_hopf}
\end{figure}

A simple example of a fiber stabilization is given in Figure \ref{fig_hopf}. Fiber stabilization can be viewed as a generalization of the technique used in \cite{chrisman2014virtual} to give a virtual knot theory proof of the classical result of Whitten that for every $m$, there are infinitely many $m$ component non-invertible non-split links whose components are all invertible (see \cite{whitten_sublinks}, where a stronger result is proved). The idea is to begin with a two component SSF link $L=J \sqcup K$ with $J$ and $K$ each invertible in $\mathbb{S}^3$. The knot $K$ is chosen so that the associated virtual knot $\upsilon$ is non-classical and fails to satisfy a certain symmetry condition that would necessarily hold if $L$ was invertible (see \cite{chrisman2014virtual}, Theorem 5). The fact that $\upsilon$ is non-classical guarantees that $L$ is non-split (see \cite{chrisman2014virtual}, Corollary 4). A three component link $L_0=J_0 \sqcup J \sqcup K$ is obtained from $L$ by adding an unknotted component so that the link $J_0\sqcup J$ has a connected Seifert surface consisting of a boundary connect sum of a fiber $\Sigma_J$ of $J$ and a fiber of the Hopf link. Then $J _0\sqcup J$ is a fibered link, $L_0$ is in SSF, and $\upsilon$ is the associated virtual knot to $L_0$. It follows that $L_0$ is a three component non-invertible non-split link whose components are all invertible. The argument can be iterated to obtain a non-invertible non-split link of any number of components, all of which are invertible. This example shows that fiber stabilization adds new tools to the study of classical links.

A second motivation to study fiber stabilization of links is that families of link invariants often determine the invariants of their sublinks. The Torres conditions, for example, relate the MVAP of an arbitrary link to that of its sublinks (see \cite{kawauchi}, Theorem 7.4.1). As another example, if all the Milnor invariants of a link are known, then so are all the Milnor invariants of its multi-component sublinks. Since a link $L$ is a sublink of its fiber stabilization $L_0$, it is thus natural to ask if invariants of an associated virtual knot $\upsilon$ of $L_0$ can be related to invariants of $L$. For the moment we will leave these aspirations aside and content ourselves with proving the following theorem.

\begin{theorem} \label{thm_fsl} Every multi-component link has a fiber stabilization and every fiber stabilized link has a virtual cover.
\end{theorem}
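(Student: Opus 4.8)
The plan is to prove the two assertions separately: first, that every link $L=J \sqcup K$ (with $K$ a knot and $J$ the remaining components) admits a fiber stabilization, and second, that any fiber stabilized link carries a virtual cover. The second assertion turns out to be almost immediate from the definition of a virtual cover in Section \ref{sec_vc}, so the substance lies in constructing the unknotted component $J_0$. The guiding principle is to decouple the two requirements on $J_0$: the fiberedness of $J_0 \sqcup J$ is an \emph{intrinsic} property of that sublink, whereas the condition that $K$ meet a fiber with algebraic intersection $0$ only constrains the linking numbers of $K$ with $J_0$ and $J$. I would arrange the first by a universal fibering construction, and then correct the second without disturbing it.

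For the existence of a fiber stabilization, I would first apply Alexander's theorem to isotope $J$ into a closed braid about an unknotted axis $A$, and take $J_0'=A$. The complement $\mathbb{S}^3 \smallsetminus \nu(A \sqcup J)$ is then the complement of a closed braid inside the solid torus $\mathbb{S}^3 \smallsetminus \nu A$, which classically fibers over $\mathbb{S}^1$ with fiber a holed disc; hence $A \sqcup J$ is fibered with unknotted component $J_0'$. It remains to fix the linking number. Writing $c=\text{lk}(J_0',K)$ and $d=-\text{lk}(J,K)$, I would replace $J_0'$ by the band sum $J_0$ of $J_0'$ with $|d-c|$ small meridians of $K$, each lying in a ball disjoint from $A \sqcup J$ and oriented according to the sign of $d-c$. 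Since these meridians are split from $A \sqcup J$, the link $J_0 \sqcup J$ is isotopic in $\mathbb{S}^3$ to $A \sqcup J$ (slide each meridian back along its band over a disc disjoint from $J$), so $J_0 \sqcup J$ is still fibered and $J_0$ is still unknotted; but because each such disc is punctured by $K$, the class of $J_0$ in $H_1(\mathbb{S}^3 \smallsetminus K)$ changes, giving $\text{lk}(J_0,K)=d$. For a fiber $\Sigma_0$ of $J_0 \sqcup J$ one then has $K \cdot \Sigma_0=\text{lk}(K,\partial \Sigma_0)=\text{lk}(K,J_0)+\text{lk}(K,J)=0$, so $L_0=J_0 \sqcup J \sqcup K$ is a fiber stabilization of $L$.

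For the second assertion, let $L_0=J_0 \sqcup J \sqcup K$ be a fiber stabilization with fiber $\Sigma_0$, and set $N=\mathbb{S}^3 \smallsetminus \nu(J_0 \sqcup J)$ and $\Sigma=\Sigma_0 \cap N$. Because $J_0 \sqcup J$ is fibered, $N$ fibers over $\mathbb{S}^1$, and the associated infinite cyclic cover is a regular orientation preserving covering $\Pi:\Sigma \times \mathbb{R} \to N$, exactly as in Section \ref{sec_vc}. The knot $K$ lies in $N$, and the degree of $K$ under the fibration $N \to \mathbb{S}^1$ equals its algebraic intersection with the fiber, namely $K \cdot \Sigma_0=0$; hence $K$ lifts to a knot $\mathfrak{k} \subset \Sigma \times \mathbb{R}$ with $\Pi(\mathfrak{k})=K$. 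The triple $(\mathfrak{k}^{\Sigma \times \mathbb{R}},\Pi,K^N)$ is then a virtual cover of $K$, as desired.

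I expect the main obstacle to be the linking correction in the second paragraph: one must change $\text{lk}(J_0,K)$ to the prescribed value while simultaneously preserving the unknottedness of $J_0$ and the abstract link type (hence the fiberedness) of $J_0 \sqcup J$. The key observation that resolves this is that band summing with meridians of $K$ that are split from $J_0 \sqcup J$ alters the embedding only relative to $K$, not the isotopy type of $J_0 \sqcup J$ in $\mathbb{S}^3$. Care is needed to keep the connecting bands embedded and disjoint from $J$ so that the resulting $J_0$ is genuinely unknotted, and to verify the standard homological identity $K \cdot \Sigma_0=\text{lk}(K,\partial \Sigma_0)$ that converts the intersection condition into a linking condition.
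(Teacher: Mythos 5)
Your second paragraph (every fiber stabilized link has a virtual cover) is essentially the paper's own proof of Lemma \ref{lemma_fsl_2} and is fine: the lifting criterion $p_{\#}[K]=0$ is exactly the condition $K \cdot \Sigma_0 = 0$. The gap is in the first paragraph, and it is fatal to the braid-axis route. From the fact that $\mathbb{S}^3 \smallsetminus \nu(A \sqcup J)$ fibers over $\mathbb{S}^1$ with holed-disc fibers you conclude that ``$A \sqcup J$ is fibered,'' but that inference is a non sequitur. A link is fibered only if its exterior fibers with fibers that are \emph{Seifert surfaces} for the link, and the holed disc is not one: its boundary meets $\partial \nu J$ in \emph{meridians} of the braid strands, not longitudes. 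Cohomologically, the braid fibration realizes the class dual to $\text{lk}(\,\cdot\,,A)$, whereas fiberedness of the link $A \sqcup J$ requires the total-linking class (the one sending every meridian of $A \sqcup J$ to $1$) to be a fibered class. These are different classes in $H^1$ of the exterior; by Thurston's theory the fibered classes form open cones, and there is no general reason for the total-linking class of a closed braid together with its axis to lie in a fibered cone. Your own subsequent computation $K \cdot \Sigma_0 = \text{lk}(K,J_0)+\text{lk}(K,J)$ silently presupposes $\partial \Sigma_0 = J_0 \sqcup J$, i.e.\ precisely the Seifert-surface fiber that the braid fibration does not provide. (Your band-sum correction of $\text{lk}(J_0,K)$ by meridians of $K$ split from $A \sqcup J$ is a sound maneuver, but it only helps once fiberedness of $J_0 \sqcup J$ with an honest fiber is in hand.)

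The paper circumvents exactly this issue: rather than adjoining the braid axis, it represents $L$ as a parted, combed mixed braid and applies Stallings' homogenization, adding $k$ unknotted strands (closing to the unknot $J_0$) so that the fixed part becomes a \emph{homogeneous} braid. For a homogeneous braid the Bennequin surface --- discs on the trivial braid closure with half-twisted bands at the crossings --- is a genuine fiber, obtained as a Murasugi sum of fibers of the links $\widehat{\sigma_i^{a_i}}$, and its boundary is $J_0 \sqcup J$ itself. The intersection condition $K \cdot \Sigma_0 = 0$ is then arranged by inserting full twists between the strands of $K$ and the added strands, which is possible there because $\text{lk}(J_0,K)$ is freely adjustable (in your setup $\text{lk}(A,J)=n>0$ is forced, which is why you needed the band-sum trick at all). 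If you want to salvage your approach, you would have to prove, not cite, that the total-linking class of $A \sqcup J$ is fibered and produce its fiber --- a genuinely delicate claim that fails to be classical; the homogeneous-braid construction is the standard way around it.
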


Applications of this idea will be explored elsewhere. To prove that every multi-component link has a fiber stabilization, we will use Stallings observation that every link is a sublink of a fibered link \cite{stallings}.  Section \ref{sec_braid_theory} will review this result in detail, along with some other necessary constructions from braid theory. In Section \ref{sec_stable_proofs}, we prove Theorem \ref{thm_fsl}.

\subsection{Some braid theory} \label{sec_braid_theory} Two constructions from braid theory are needed to prove that every link has a fiber stabilization: mixed braids and homogeneous braids. Let $B$ be a braid and let $\widehat{B}$ denote the closure of $B$. Let $L$ be any link in $\mathbb{S}^3\smallsetminus\nu \widehat{B}$. Then $\widehat{B} \sqcup L$ is called a \emph{mixed link}. Here $\widehat{B}$ is called the \emph{fixed part} and $L$ is called the \emph{moving part}. The mixed link may be represented by a \emph{mixed braid} $B \sqcup \beta$ whose closure $\widehat{B} \sqcup \widehat{\beta}$ is equivalent to $\widehat{B} \sqcup L$ \cite{lamb_rourke}. 

A mixed braid on $m+n$ strands is said to be \emph{parted} if the first $m$ strands are the strands in the fixed part and the last $n$ strands are in the moving part. In \cite{lamb_rourke} (Section 2, Lemma 1), it was shown that every mixed link may be represented by a parted mixed braid. Moreover, every parted mixed braid may be combed so that the fixed part lies below the moving part. This means that the mixed braid may written as $\beta'\cdot B'$, where the first $m$ strands of $\beta'$ are trivial and $B'$ is $B$ with the trivial $n$-strand braid attached on the right (see Figure \ref{fig_comb}).

\begin{figure}[htb]
\fcolorbox{black}{white}{
\begin{tabular}{c}
\def\svgwidth{2in}
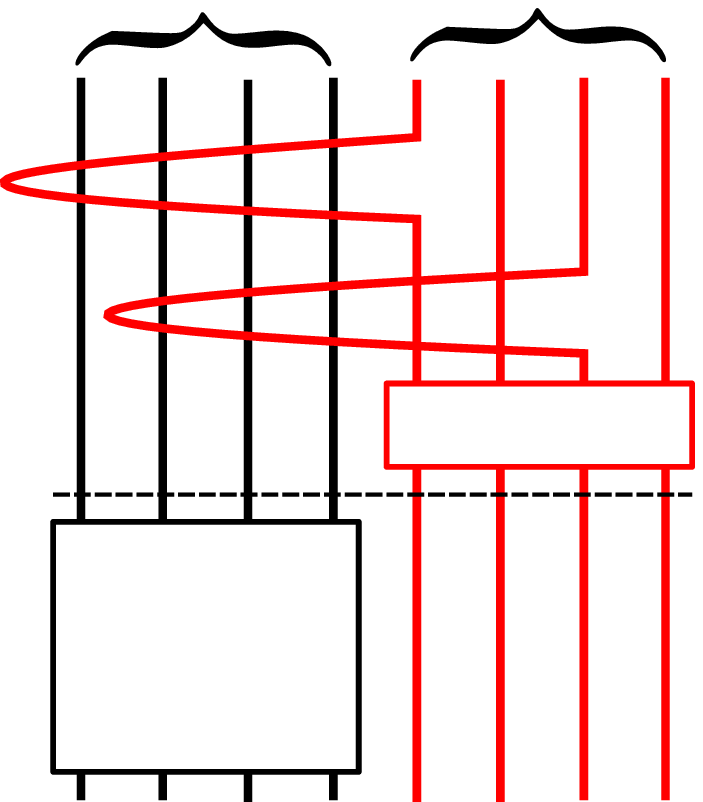 
\end{tabular}}
\caption{A schematic diagram of a parted combed braid on $m+n$ strands.} \label{fig_comb}
\end{figure}

Homogeneous braids, our second ingredient, are useful for constructing fibered links. Let $n$ be a fixed number of strands and let $\sigma_i$ denote the $n$-strand braid where the $i$-th strand over-crosses the $(i+1)$-st stand. Suppose furthermore that $B$ is written in the generators $\sigma_1,\ldots,\sigma_n$ as $B=\sigma_{i_1}^{\varepsilon_{i_1}}\sigma_{i_2}^{\varepsilon_{i_2}}\cdots\sigma_{i_N}^{\varepsilon_{i_N}}$, where $\varepsilon_{i_j}=\pm 1$. If every $\sigma_i$ appears in the word for $B$ and $\varepsilon_{i_j}=\varepsilon_{i_k}$ whenever $\sigma_{i_j}=\sigma_{i_k}$, then $B$ is said to be \emph{homogeneous}. To see that one may construct a fibered link from a homogeneous braid, proceed as follows (Stallings \cite{stallings}). First note that the closure of the trivial $n$-strand braid can be viewed as a union of parallel squares bounding $n$ discs. Attach half-twisted bands to these discs according to the word $B$. The resulting surface is a successive Murasugi sum (or \emph{generalized plumbing}) of fibers of the fibered links $\widehat{\sigma_1^{a_1}},\ldots,\widehat{\sigma_{n-1}^{a_{n-1}}}$, for some non-zero integers $a_1,\ldots,a_{n-1}$. As Murasugi sums of fibered links are fibered, $\widehat{B}$ is fibered (see e.g. \cite{kawauchi}, Section 4.2). Figure \ref{fig_homogenous} shows the construction for the homogeneous $3$-strand braid $\sigma_2^{-1}\sigma_1\sigma_2^{-1}\sigma_1$.

\begin{figure}[htb]
\fcolorbox{black}{white}{
\begin{tabular}{ccccc} \\
\begin{tabular}{c} \def\svgwidth{1in} 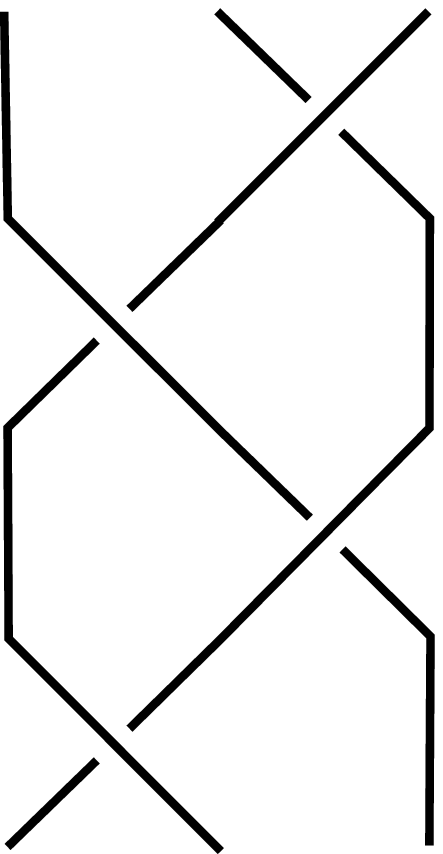 \end{tabular} & & $\longrightarrow$ & & \begin{tabular}{c}  \def\svgwidth{1.6in} 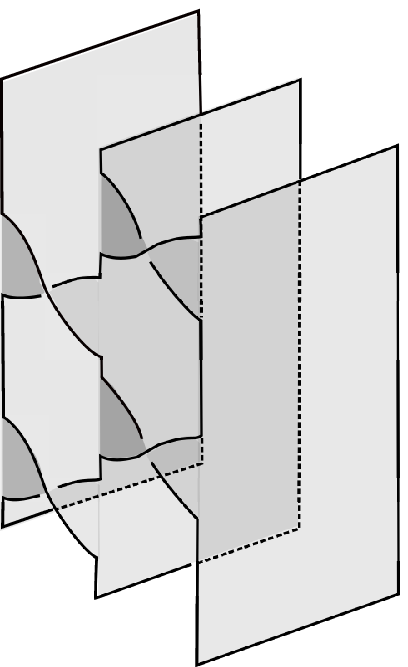 \end{tabular} \\
\end{tabular}}
\caption{(Left) A homogeneous braid $B$. (Right) A fiber of $\widehat{B}$ obtained by plumbing.}\label{fig_homogenous}
\end{figure}

The key to fiber stabilization is a theorem of Stallings which states that any link $J$ can be made into a fibered link by adding some unknotted component $J_0$ (see \cite{stallings}, Theorem 3). We now explain the idea behind Stallings' construction in detail. Let $J$ be any link and let $B$ be an $m$-strand braid word satisfying $\widehat{B} \leftrightharpoons J$. The exponent of the first occurrence of each $\sigma_i$ in $B$ will be considered as the \emph{correct sign of} $\sigma_i$. To correct the incorrect signs and make a fibered link, we will add strands to $B$ so that the homogeneity condition is enforced.

Suppose that there is some first letter $\alpha=\sigma_i^{\varepsilon}$ in $B$ with an incorrect sign. Add an $(m+1)$-st strand to the braid so that it dips around $\alpha$ to the left and returns to the $m+1$ level. This has the effect of pushing $\alpha$ over to the right by one strand. Thus, $\alpha$ now corresponds to the generator $\sigma_{i+1}^{\varepsilon}$. See the top picture in Figure \ref{fig_how_to_fiber}. Crossings of the new strand (drawn green) with the old strands are chosen so that they have the correct sign for each $\sigma_i$.  As $\sigma_m$ does not occur in $B$, we take $\varepsilon$ to be the correct sign of $\sigma_m$. If $\sigma_{i+1}^{\varepsilon}$ also has the incorrect sign, then we can continue to add parallel strands until the sign is correct or until it becomes $\sigma_{m}^{\varepsilon}$. As this now has the correct sign and no new incorrect $\sigma_i$ have been added, the total number of incorrect signs in $B$ has decreased by $1$.

Continuing inductively, we use the additional strands to remove the remaining incorrect crossings of $B$. More strands can be added to the right of the braid if necessary. If $k$ strands are added, then it is necessary to choose the correct sign for the generators $\sigma_{k+1},\ldots,\sigma_{m+k-1}$. Choose their correct sign to be $-{\varepsilon}$, where $\varepsilon$ is the sign of the first incorrect crossing as above. This choice ensures that it is possible to push any crossing over far enough so that it will eventually have the correct sign. Note that the closure of the $k$ added strands forms a $k$-component unlink. Concatenating the new braid with $b=\sigma_{m+1}^{-\varepsilon} \sigma_{m+2}^{-\varepsilon} \cdots\sigma_{m+k-1}^{-\varepsilon}$ gives a $k$-strand braid whose closure is the unknot. For a schematic diagram of the full construction, see Figure \ref{fig_how_to_fiber}, bottom. The closure of this braid is the desired fibered link $J_0 \sqcup J$.    

\begin{figure}[htb]
\fcolorbox{black}{white}{
\begin{tabular}{ccccc} \\
\begin{tabular}{c} \\ \def\svgwidth{.5in} 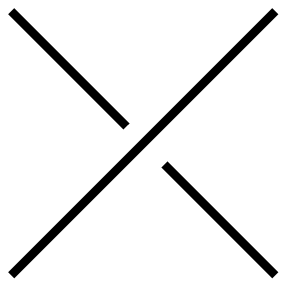 \end{tabular} & & \begin{tabular}{c} $\longrightarrow$ \end{tabular} & & \begin{tabular}{c}  \def\svgwidth{2in} 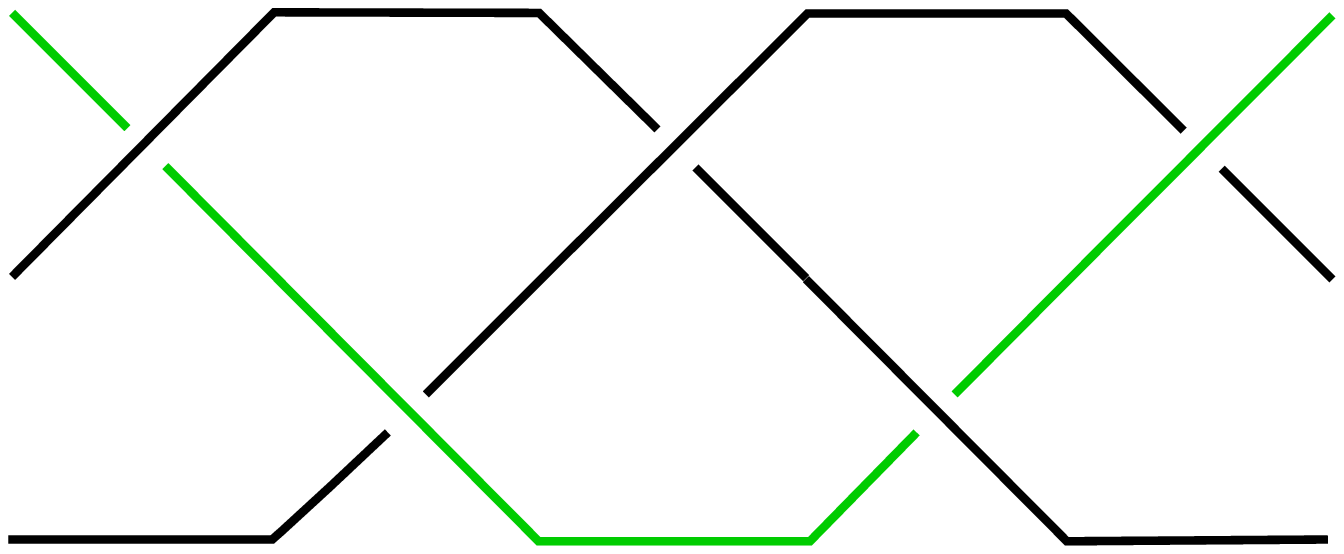 \end{tabular} \\ \\ \hline & & & & \\ \\
\multicolumn{5}{c}{\def\svgwidth{3.5in}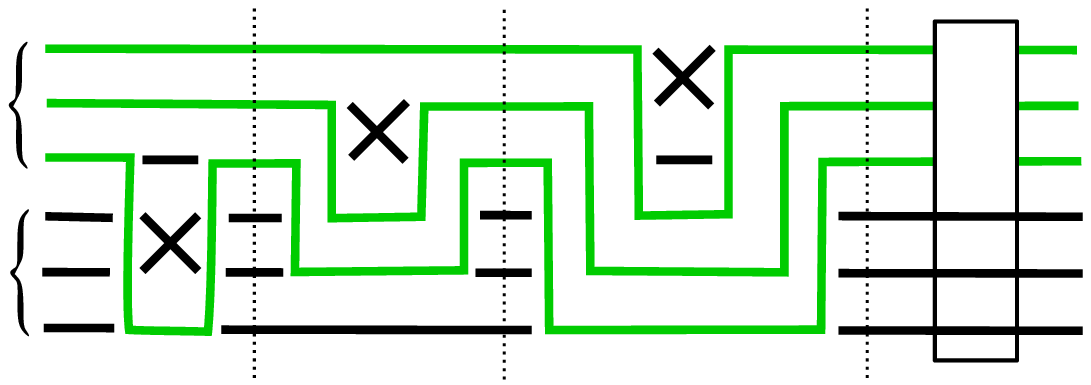}
\end{tabular}}
\caption{(Top) Adding a strand to force homogeneity. (Bottom) Schematic of adding a green unknotted component $J_0$ so that $J_0 \sqcup J$ is fibered.}\label{fig_how_to_fiber}
\end{figure}

\subsection{Proof of Theorem \ref{thm_fsl}} \label{sec_stable_proofs} With these preliminaries in place we are now ready to prove Theorem \ref{thm_fsl}: that every multi-component link has a fiber stabilization (Lemma \ref{lemma_fsl_1}) and that every fiber stabilized link as a virtual cover (Lemma \ref{lemma_fsl_2}).

\begin{lemma} \label{lemma_fsl_1} Every multi-component link $L=J \sqcup K$,with $K$ a knot, has a fiber stabilization.
\end{lemma}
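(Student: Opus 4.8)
The plan is to decouple the two requirements on $J_0$ — that $J_0 \sqcup J$ be fibered, and that $K$ meet a fiber of $J_0 \sqcup J$ algebraically trivially — and to satisfy them one at a time. First I would reinterpret the intersection-number condition homologically. If $\Sigma_0$ is any Seifert surface for $J_0 \sqcup J$ and $K$ is disjoint from $J_0 \sqcup J$, then the algebraic intersection number of $K$ with $\Sigma_0$ equals $\text{lk}(K,\partial \Sigma_0)=\text{lk}(K,J_0 \sqcup J)=\text{lk}(K,J)+\text{lk}(K,J_0)$, a quantity independent of the chosen surface; in particular this holds when $\Sigma_0$ is a fiber. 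Thus it suffices to produce an unknotted $J_0$, disjoint from $J \cup K$, such that $J_0 \sqcup J$ is fibered and $\text{lk}(K,J_0)=-\text{lk}(K,J)$.

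To secure fiberedness I would apply Stallings' construction \cite{stallings}, exactly as reviewed in Section \ref{sec_braid_theory}: present $J$ as a braid closure and add strands to enforce homogeneity, obtaining an unknotted component $J_0'$ for which $J_0' \sqcup J$ is a fibered link. Since $J_0'$ is built in a neighborhood of a diagram of $J$, while $K$ is a fixed one-dimensional curve, general position lets me take $J_0'$ disjoint from $K$ (it is automatically disjoint from $J$). Write $c=\text{lk}(K,J)$ and $c'=\text{lk}(K,J_0')$; at this stage the value $c'$ is uncontrolled.

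The remaining, and main, step is to correct the linking number from $c'$ to $-c$ without disturbing fiberedness. The key observation is that fiberedness is a property of the sublink $J_0' \sqcup J$ alone and is blind to $K$. I would push a small arc of $J_0'$ through the curve $K$: performed in a small ball meeting neither $J$ nor the rest of $J_0'$, this is an ambient isotopy of $J_0' \sqcup J$ in $\mathbb{S}^3$, because $K$ is simply ignored, so $J_0' \sqcup J$ remains the same fibered link and $J_0'$ remains unknotted. Viewed in the full link $J_0' \sqcup J \sqcup K$, however, the move is a crossing change of $J_0'$ with $K$ and changes $\text{lk}(J_0',K)$ by $+1$ or $-1$, the sign being at my disposal. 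Carrying out $|c'+c|$ such pokes of the appropriate sign yields the desired unknotted $J_0$ with $\text{lk}(K,J_0)=-c$, whence $\text{lk}(K,J_0 \sqcup J)=0$ and $K$ meets a fiber $\Sigma_0$ algebraically trivially.

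The hard part is precisely this decoupling: a priori one fears that forcing the linking condition would destroy the delicate homogeneity and plumbing structure underlying fiberedness. The resolution is that the adjustment is an honest isotopy of $J_0' \sqcup J$ — it fails to be an isotopy only once $K$ is reinstated — so the fibered link type of $J_0' \sqcup J$ is preserved exactly while the linking of $J_0$ with $K$ is changed freely. I would close by remarking that whether $J$ is a knot or a genuine link plays no role in the argument, so it applies to every multi-component $L=J\sqcup K$ with $K$ a knot, which is what Lemma \ref{lemma_fsl_1} asserts.
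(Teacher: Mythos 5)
Your proof is correct, and it reaches the same two-ingredient skeleton as the paper---Stallings' theorem to fiber $J_0'\sqcup J$, followed by a correction that alters only crossings between $J_0'$ and $K$, which is invisible to the sublink $J_0'\sqcup J$---but the implementation is genuinely different. The paper never leaves braid form: it represents all of $L=J\sqcup K$ as a parted, combed mixed braid in the sense of Lambropoulou--Rourke \cite{lamb_rourke}, applies Stallings' homogenization to the fixed part to produce the green strands closing to $J_0$, takes $\Sigma_0$ to be the explicit plumbed fiber (discs plus half-twisted bands), and then kills the algebraic intersection number by inserting $\pm$ full twists between the leftmost strand of $K$ and the rightmost green strand---exactly the braid-diagram incarnation of your ``pokes.'' You instead treat Stallings \cite{stallings} as a black box, get disjointness of $J_0'$ from $K$ by general position, and replace the diagrammatic intersection count with the homological identity $K\cdot\Sigma_0=\text{lk}(K,\partial\Sigma_0)=\text{lk}(K,J)+\text{lk}(K,J_0)$, so that the condition is manifestly independent of the chosen Seifert surface and the correction reduces to adjusting one integer by crossing changes that are honest isotopies of $J_0'\sqcup J$. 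Your route is shorter and makes transparent that nothing delicate about the fibration is at risk (it also yields the slightly stronger observation that vanishing total linking number gives intersection number zero with \emph{every} fiber, not just some fiber); what the paper's heavier mixed-braid setup buys is an explicit diagram of the stabilized link $J_0\sqcup J\sqcup K$ in a form adapted to the rest of the paper, where one wants to pass from such links to SSF position and read off the associated virtual knot. The one point worth stating with a little more care in your write-up is the general-position step: one first isotopes $J$ (carrying $K$ along) into braid position before running Stallings' construction, and then perturbs the new component off the one-dimensional set $K$; since any crossings of $J_0'$ with $K$ created this way are repaired by your linking-number correction anyway, this costs nothing.
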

\begin{proof} Represent $L$ as the closure of a parted mixed braid $B \sqcup \beta$, where the fixed part $B$ satisfies $\widehat{B} \leftrightharpoons J$ and the moving part $\beta$ satisfies $\widehat{\beta} \leftrightharpoons K$. Suppose that $B$ has $m$ black strands and $\beta$ has $n$ red strands. By the above remarks, there is an $m+k$ strand braid $B'$ such that $\widehat{B'}$ is a fibered link $J_0\sqcup J$ where $J_0$ is an unknot. Now replace the fixed part of $B \sqcup \beta$ with $B' \sqcup \beta$ as follows. The $k$ added strands (drawn in green) to $B$ are placed in between the $m$ strands of $B$ and the $n$ strands of $\beta$. In the top portion of the parted combed braid, we now have that the first $m+k$ strands are trivial. The crossings of the moving and the fixed part are unchanged. The moving part is chosen to always cross over any of the green strands. 

The previous section showed how to construct a fiber $\Sigma_0$ for $J_0 \sqcup J=\widehat{B'}$. It consists of discs attached to the closure of the $m+k$ strand trivial braid and half-twisted bands at the crossings. Then the algebraic intersection number of $\Sigma_0$ and $K$ can be made $0$ by adding an appropriate number of $(\pm)$ full twists between the leftmost red strand and the rightmost green strand. This construction is illustrated in Figure \ref{fig_comb_with_fiber}. This completes the proof.
\end{proof}

\begin{figure}[htb]
\fcolorbox{black}{white}{
\begin{tabular}{c}
\def\svgwidth{2.4in}
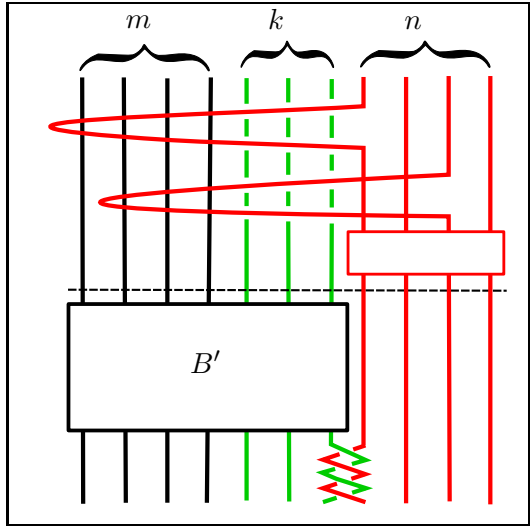 
\end{tabular}}
\caption{A fiber stabilization of a link $L=J \sqcup K$. The $m$ strands close to $J$, the $k$ strands close to an unknot $J_0$, and the $n$ strands close to $K$.} \label{fig_comb_with_fiber}
\end{figure}
   
\begin{lemma} \label{lemma_fsl_2} Every fiber stabilized link has a virtual cover.
\end{lemma}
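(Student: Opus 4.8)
The plan is to take $J_0 \sqcup J$ as the fibered part and $K$ as the knot lying in its complement, and then to recognize that the defining condition of a fiber stabilization is exactly the lifting criterion needed to build a virtual cover in the sense of Section \ref{sec_vc}. So first I would set $N=\mathbb{S}^3\smallsetminus\nu(J_0 \sqcup J)$; since $K$ is disjoint from $J_0 \sqcup J$, we have $K \subset N$. Because $J_0 \sqcup J$ is a fibered link with fiber $\Sigma_0$, the exterior $N$ is a surface bundle over $\mathbb{S}^1$ whose fiber is the compact connected oriented surface $\Sigma_0 \cap N$, with fibration map $\phi:N \to \mathbb{S}^1$. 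Pulling back the universal cover $\mathbb{R}\to\mathbb{S}^1$ along $\phi$ produces a regular, orientation-preserving infinite cyclic covering $\Pi:(\Sigma_0\cap N)\times\mathbb{R}\to N$ with deck group $\mathbb{Z}$ (regular since $\mathbb{Z}$ is abelian; orientation-preserving since both the fiber and the $\mathbb{R}$-direction carry compatible orientations). This is precisely the covering data $\Sigma\times\mathbb{R}\to N$ demanded by the definition of a virtual cover.

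Next I would translate the intersection hypothesis into a statement about $\phi$. The condition that $K$ has algebraic intersection number $0$ with the fiber $\Sigma_0$ says exactly that the composite $K\hookrightarrow N \xrightarrow{\phi}\mathbb{S}^1$ has degree $0$, i.e. $\phi_\ast[K]=0$ in $H_1(\mathbb{S}^1)\cong\mathbb{Z}$, so this composite is null-homotopic. By the lifting criterion for covering spaces, the image of $\pi_1(K)$ under inclusion then lies in $\Pi_\ast\bigl(\pi_1((\Sigma_0\cap N)\times\mathbb{R})\bigr)=\ker\phi_\ast$, so the inclusion $K\hookrightarrow N$ lifts to a map $\mathfrak{k}:K\to(\Sigma_0\cap N)\times\mathbb{R}$ with $\Pi\circ\mathfrak{k}$ equal to the inclusion.

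To finish, I would check that $\mathfrak{k}$ is genuinely a knot rather than a multi-component or non-embedded lift. It is injective because $\Pi\circ\mathfrak{k}$ is injective, and it is a local homeomorphism onto its image because $\Pi$ is; an injective immersion of the compact $1$-manifold $K$ is therefore an embedding, so $\mathfrak{k}$ is an embedded circle. Moreover $\phi_\ast[K]=0$ forces $K$ to lift to closed loops (not an infinite arc running off to $\pm\infty$ in the $\mathbb{R}$-factor), so the chosen lift is a single circle. Thus $\mathfrak{k}^{(\Sigma_0\cap N)\times\mathbb{R}}$ is a knot lying over $K^N$, and the triple $(\mathfrak{k},\Pi,K)$ satisfies the definition of a virtual cover verbatim. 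Since $L_0=J_0\sqcup J\sqcup K$ was an arbitrary fiber stabilized link, the lemma follows.

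I expect essentially no serious obstacle here: the argument is a matching of definitions, and the substantive content has already been assembled in the preceding constructions. The only points requiring genuine care are the identification of the algebraic intersection number of $K$ with $\Sigma_0$ with the degree $\phi_\ast[K]$ (which is where the fiber stabilization hypothesis is actually consumed) and the remark that $\phi_\ast[K]=0$ guarantees the lift $\mathfrak{k}$ is connected, hence a single knot.
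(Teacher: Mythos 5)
Your proposal is correct and follows essentially the same route as the paper: both pass to the exterior $N_0=\mathbb{S}^3\smallsetminus\nu(J_0\sqcup J)$, use the fibration over $\mathbb{S}^1$ pulled back along $\exp:\mathbb{R}\to\mathbb{S}^1$ to obtain the infinite cyclic cover $\Sigma_0\times\mathbb{R}\to N_0$, and observe that the algebraic-intersection-number-zero hypothesis says precisely that $[K]$ maps to $0\in\pi_1(\mathbb{S}^1)$, so $K$ lifts to a knot $\mathfrak{k}$ giving the virtual cover $(\mathfrak{k}^{\Sigma_0\times\mathbb{R}},\Pi,K^{N_0})$. Your extra verifications (that the lift is a connected embedded circle) are details the paper leaves implicit but do not change the argument.
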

\begin{proof} Let $L=J \sqcup K$ be an $(n+1)$-component link with $K$ a knot. Let $J_0 \subset \mathbb{S}^3$ be an unknot in the complement of $L$ such that $L_0 =J_0 \sqcup J \sqcup K$ is a fiber stabilization of $L$. Let $\Sigma_0$ be a fiber of $J_0 \sqcup J$ such that $K\cdot \Sigma_0=0$. There is a fiber bundle projection $p:N_0 \to \mathbb{S}^1$ with fiber $\Sigma_0$, where $N_0=\mathbb{S}^3\smallsetminus\nu(J_0 \sqcup J)$, such that the following diagram commutes.
\[
\xymatrix{\Sigma_0 \times \mathbb{R} \ar[r] \ar[d]_{\Pi}& \mathbb{R} \ar[d]^{\exp} \\
N_0 \ar[r]_p & \mathbb{S}^1}
\]
By hypothesis, $p_{\#}:\pi_1(N_0) \to \pi_1(\mathbb{S}^1)$ maps $[K]$ to $0 \in \mathbb{Z}$. Thus, $K$ lifts to a knot $\mathfrak{k}$ in $\Sigma_0 \times \mathbb{R}$ and there is a virtual cover $(\mathfrak{k}^{\Sigma_0 \times \mathbb{R}}, \Pi, K^{N_0})$.
\end{proof}

\section{Future directions} \label{sec_future}

Above we gave two instances of virtual knot invariants that are realizable by link invariants in $\mathbb{S}^3$. It is likely this program can be carried out for many other polynomial link invariants. Here we suggest some directions that seem promising for extending this point of view.
\begin{enumerate}
\item Understand index and triple linking numbers in a more general setting. There are examples where the index does not recover the triple linking number when the fiber is not a connected sum of trefoils or figure eight knots. We hope that this is a special case of a larger theorem that relates index in general to a geometrically defined quantity. 
\item Use virtual covers to compute Kauffman bracket skein modules for fibered knot complements. A particularly interesting focus would be hyperbolic fibered knot complements, such as the figure eight knot complement.
\item Section \ref{sec_alex} gave a formula relating MVAP of boundary links, which comes from the second elementary ideal, to the Alexander polynomial of AC knots, which comes from the first elementary ideal. For general SSF links, i.e. those that are not necessarily boundary links, is there a similar formula relating the MVAP of the first elementary ideal to the Alexander polynomial of the zeroth elementary ideal for virtual knots?
\item The finite-type concordance invariants of string links can be identified with the Milnor invariants \cite{habegger_masbaum}. The Henrich-Turaev polynomial \cite{henrich2010sequence} is a degree one finite-type concordance invariant of long virtual knots that is defined via the index, which is in turn given by the triple linking number. Can all finite-type concordance invariants of long virtual knots be represented as Milnor invariants of string links? 
\item  Can signatures for virtual knots be obtained by specialization of the multi-variable signature function of a multi-component link \cite{cimasoni_florens, cooper1982signatures,cooper1982universal}?
\item To what extent does a fiber stabilization produce a unique associated virtual knot? Suppose $L$ is a link and $L_0', L_0''$ are fiber stabilizations. If $L_0',L_0''$ are SSF links with invariant associated virtual knots $\upsilon'$, $\upsilon''$, respectively, what is the relationship between $\upsilon'$ and $\upsilon''$?
\end{enumerate}
We hope these questions will be considered in future papers and invite the community to participate in their development.
\newline
\newline
\noindent
\subsection*{Acknowledgements} The first author acknowledges a Spring 2015 Creativity and Research Grant from Monmouth University. These funds allowed him to visit the second author at the University of Nebraska-Omaha and to complete Section \ref{sec_index}. Both authors are grateful for helpful discussions and encouragement from C. Frohman and H. U. Boden. Lastly we would like to thank an anonymous referee for a close reading of a previous draft of this paper that led to an improvement in the statement of Theorem \ref{thm_main_alex}. The referee also improved the paper by pointing out some sign errors in the computations of $\bar{\mu}_{123}(L)$ and a computational error in the original arXiv version (arXiv:1706.07756v1[math.GT], Section 2.2) of this paper.

\bibliographystyle{acm}
\bibliography{IndexMVAP}

\end{document}